\documentclass[11pt]{amsart}

\usepackage{mathtools}
\usepackage{amssymb}
\usepackage{enumitem}
\usepackage[colorlinks=true,linkcolor=blue,citecolor=blue,urlcolor=blue]{hyperref}

\setlist{nosep}
\numberwithin{equation}{section}

\newcommand{\D}{\mathbb D}

\newcommand{\Nzero}{\mathbb N_0}
\newcommand{\A}{A^2(\D)}
\newcommand{\Bop}{\mathcal B}
\newcommand{\Kop}{\mathcal K}
\newcommand{\Top}{\mathcal T}
\newcommand{\Sone}{\mathcal S_1}
\newcommand{\Tr}{\operatorname{Tr}}

\newcommand{\HS}{\mathrm{HS}}
\newcommand{\id}{\mathrm{Id}}

\newtheorem{theorem}{Theorem}[section]
\newtheorem{lemma}[theorem]{Lemma}
\newtheorem{corollary}[theorem]{Corollary}
\newtheorem{proposition}[theorem]{Proposition}
\theoremstyle{remark}
\newtheorem{remark}[theorem]{Remark}

\title[A compact counterexample to Berezin approximation]
{A Compact Counterexample to Su\'arez's Berezin Approximation Question}

\author{Mishko Mitkovski}
\address{School of Mathematical and Statistical Sciences, Clemson University,
Clemson, SC 29630}
\email{mmitkov@clemson.edu}

\author{Yuyuan Ouyang}
\address{School of Mathematical and Statistical Sciences, Clemson University,
Clemson, SC 29630}
\email{yuyuano@clemson.edu}

\date{August 2026}

\subjclass[2020]{Primary 47B35; Secondary 32A36, 47A30, 47L80}
\keywords{Bergman space, Berezin transform, Toeplitz algebra, compact operator,
Hankel matrix, trace class}

\begin{document}

\begin{abstract}
Let $A^2(\D)$ be the unweighted Bergman space and write
$Q_m(S)=T_{B_m(S)}$ for the map induced by the $m$th higher-order
Berezin transform.  Su\'arez asked in~\cite{{Suarez2005}} whether $Q_m(S)$ converges to $S$ in
operator norm for every $S$ in the full Bergman Toeplitz algebra. We answer
this question negatively in a strong form: there is a compact operator
$S$ such that
\[
   \sup_{m\geq 0}\|Q_m(S)\|=\infty.
\]
In particular, along a strictly increasing sequence $(m_n)$ one has
\[
   \|Q_{m_n}(S)-S\|\longrightarrow\infty.
\]

The obstruction is a moving matrix edge. The proof uses a moving family of
rank-one test operators, each supported in the \(m\)th matrix column. Near
the corresponding edge, these test operators are sent to weighted Hankel
matrices, and the limiting coefficients form an explicit Pascal kernel. We prove a standalone
Pascal--Hankel theorem showing that the associated weighted Hankel
transformation fails to map $\ell^2$ boundedly into trace class.  Finite-section
convergence, trace duality, and the Uniform Boundedness Principle then transfer
this instability to the exact Bergman maps.
\end{abstract}

\maketitle

\section{Introduction}

Let $dA=\pi^{-1}\,dx\,dy$, and let $A^2(\D)$ denote the Bergman subspace of
analytic function in $L^2(\D,dA)$.  We use the orthonormal basis
\[
   e_n(z)=\sqrt{n+1}\,z^n,
\]
of $A^2(\D)$ and write $E_{pq}=e_p\otimes e_q$ for the matrix units operators on $A^2(\D)$, where
$(x\otimes y)f=\langle f,y\rangle x$.  For $a\in L^\infty(\D)$, let
$T_a f=P(af)$ be the Toeplitz operator and set
\[
   \Top(L^\infty)=C^*(T_a:a\in L^\infty(\D)).
\]
For $m\in\Nzero$, let $B_m$ be the $m$th higher-order Berezin transform and
put $Q_m(S)=T_{B_m(S)}$.

Su\'arez proved norm convergence $Q_m(T_a)\to T_a$ for
$a\in L^\infty(\D)$ and, more generally, established convergence for several
important classes of operators.  He then asked whether
\begin{equation}\label{eq:suarez-question}
   \|Q_m(S)-S\|\longrightarrow 0
   \qquad\text{for every }S\in\Top(L^\infty);
\end{equation}
see \cite[Problem~(2), p.~20]{Suarez2005}.  He indicated that he expected an
affirmative answer.  Further positive evidence comes from the radial theory:
every radial element of the Toeplitz algebra is approximated in norm by its
higher-order Berezin--Toeplitz operators
\cite{BauerHerreraVasilevski,DawsonDewageMitkovskiOlafsson,Suarez2005}. It is important that the individual Toeplitz operators are norm dense in
\(\mathcal T(L^\infty)\)~\cite{Xia}. Nevertheless, convergence on this dense class does
not pass to its norm closure without uniform bounds for the family
\((Q_m)_{m\geq0}\).

Our result shows that the passage from individual Toeplitz operators and
radial elements to the full Toeplitz algebra is impossible.  In fact, the failure already occurs for a compact input.

\begin{theorem}[Main theorem]\label{thm:main}
There exists $S\in\Kop(\A)$ such that
\[
   \sup_{m\geq 0}\|Q_m(S)\|=\infty.
\]
Consequently, there are strictly increasing integers $m_n\to\infty$ for which
\[
   \|Q_{m_n}(S)\|\geq 2^n
   \quad\text{and}\quad
   \|Q_{m_n}(S)-S\|\longrightarrow\infty.
\]
In particular, $S$ is a counterexample to \eqref{eq:suarez-question}.
\end{theorem}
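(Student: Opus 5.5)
The plan is to argue by contradiction through the Uniform Boundedness Principle, following the route sketched in the abstract. Each $Q_m$ is a bounded linear map on $\Kop(\A)$; in fact $\|Q_m(S)\|\le\|B_m(S)\|_\infty\le\|S\|$, since $B_m(S)$ is an average of $S$ against normalized positive kernels. Suppose, contrary to the claim, that $\sup_m\|Q_m(S)\|<\infty$ for every compact $S$. Then the Uniform Boundedness Principle on the Banach space $\Kop(\A)$ gives $\sup_m\|Q_m\|_{\Kop\to\Bop}<\infty$. Trace duality identifies $\Kop^*$ with $\Sone$, so the adjoint maps $Q_m^*\colon \Sone\to\Sone$ are then uniformly bounded. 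Thus it suffices to find trace-class test operators $R_m$, with $\|R_m\|_{\Sone}\le 1$, such that $\|Q_m^*(R_m)\|_{\Sone}\to\infty$.

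For the test operators we take rank-one operators supported in the $m$th matrix column, namely
\[
R_m=\sum_q c_q E_{q,m}, \qquad c\in\ell^2 ,
\]
so that $\|R_m\|_{\Sone}=\|c\|_{\ell^2}$. Because $B_m$ and Toeplitz quantization are rotation covariant, $Q_m$ maps the diagonal band $\{E_{p,p+k}\}$ to itself. The plan is therefore to compute the matrix entries
\[
\langle Q_m(E_{pq})e_j,e_i\rangle
\]
explicitly. These are finite combinations of Beta-type integrals coming from $B_m$, which expands as $(1-|z|^2)^{m+2}$ times a polynomial in $|z|^2$ of degree about $m$, followed by the Toeplitz integral. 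The key step is to show that, after re-indexing near the edge, $Q_m^*(R_m)$ is a weighted Hankel matrix
\[
\bigl(w_{m}(i,j)\,c_{i+j}\bigr).
\]
Here we set $i=m-s$, $j=m+t$, or a similar centring at the index $m$, and we want $w_m(i,j)\to K(s,t)$ as $m\to\infty$ with $s,t$ fixed, where $K$ is an explicit Pascal kernel of binomial-coefficient type. This is where the moving edge enters: the relevant mass concentrates in a fixed-size window around the $m$th diagonal position.

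Next, we prove the Pascal--Hankel statement: the map
\[
H_K\colon c\mapsto \bigl(K(s,t)\,c_{s+t}\bigr)_{s,t}
\]
is not bounded from $\ell^2$ into $\Sone$. We would show this by choosing finitely supported $c$, for instance $c_n=n^{-1/2}(\log n)^{-\alpha}$ truncated at $N$, or a lacunary choice. The trace norm of $H_K c$ is then estimated from below by pairing with a well-chosen bounded matrix $U$ (a partial isometry or a sign pattern adapted to $K$), giving
\[
\|H_K c\|_{\Sone}\ge |\Tr(U^*H_Kc)|=\Bigl|\sum K(s,t)c_{s+t}\bar u_{st}\Bigr|.
\]
We want this to grow like $\sqrt{\log N}$, or at least to be unbounded, relative to $\|c\|_2$. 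The binomial structure of $K$ should make the antidiagonal sums computable via Vandermonde-type identities.

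Finally, we transfer the result by finite sections. Fix $N$ and a $c$ supported in $[0,N]$ with
\[
\|H_Kc\|_{\Sone}\ge M\|c\|_2 .
\]
The $N\times N$ corner of $Q_m^*(R_m)$ at the edge converges entrywise, hence in $\Sone$ because the dimension is finite, to $H_Kc$. Since compressing to a corner does not increase trace norm, this yields
\[
\liminf_m\|Q_m^*(R_m)\|_{\Sone}\ge M\|c\|_2 .
\]
As $M$ is arbitrary, this contradicts the uniform bound, and so some compact $S$ has $\sup_m\|Q_m(S)\|=\infty$.

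For the consequences, pick inductively $m_n>m_{n-1}$ with $\|Q_{m_n}(S)\|\ge 2^n$. Then
\[
\|Q_{m_n}(S)-S\|\ge 2^n-\|S\|\to\infty .
\]
I expect the main obstacle to be the asymptotic computation of the edge weights $w_m\to K$ together with the proof that $H_K$ is unbounded into trace class. The first of these requires careful handling of alternating binomial sums in the higher-order Berezin kernel.
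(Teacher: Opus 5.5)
Your architecture is the paper's: Uniform Boundedness on $\Kop(\A)$, trace duality, rank-one test operators in the $m$th column, an edge limit to a Pascal-weighted Hankel matrix, and finite-section transfer. The final transfer step is sound.

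The setup, however, contains a claim that contradicts the theorem itself: $B_m$ is not an average against positive kernels. Its generator $\Phi_m=(m+1)\sum_{j=0}^m(-1)^j\binom mj\frac{1}{j+1}E_{jj}$ has alternating signs, and $\|\Phi_m\|_1=2^{m+1}-1$. For $S=\sum_{j\le m}(-1)^jE_{jj}$ one has $\|S\|=1$ but $B_m(S)(0)=2^{m+1}-1$. If $\|Q_m(S)\|\le\|S\|$ held for all $m$, no $S$ could satisfy $\sup_m\|Q_m(S)\|=\infty$. The Uniform Boundedness Principle only needs the $m$-dependent bound $\|B_m(S)\|_\infty\le\|\Phi_m\|_1\|S\|$, so this is repairable, but as written the proposal is inconsistent.

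The edge is also not centred at $(m,m)$. Write $x=\sum_pa_pe_p$ for your $c$. The output row stays in a fixed finite block and only the column moves: $\langle Q_m(x\otimes e_m)e_{m-u},e_r\rangle=a_{r+u}\,c_m(r+u,m;m-u)\to a_{r+u}W(r,u)$. To pass from the coefficients of $Q_m$ to $Q_m^*$, you need either the transposed coefficients or the symmetry $\Tr(Q_m(S)A)=\Tr(SQ_m(A))$ of Lemma~\ref{lem:trace-transpose}.

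The genuine gap is the Pascal--Hankel instability (Theorem~\ref{thm:pascal-hankel}). Everything rests on it, and you offer only candidates; the first one cannot work. Take a positive, slowly varying $a$ such as $a_n=n^{-1/2}(\log n)^{-\alpha}$. The rows $r\in[R,2R)$ of $(a_{r+u}W(r,u))$ then form a sampled smooth positive kernel, essentially $R^{-1/2}(\log R)^{-\alpha}r^{-1/2}e^{-u^2/(4r)}$ on the window $u\lesssim\sqrt R$. Its singular values decay rapidly, so up to lower-order errors its trace norm is comparable to its Hilbert--Schmidt norm $\asymp R^{-1/4}(\log R)^{-\alpha}$. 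That is summable over dyadic $R$, so the trace norm stays bounded.

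What is missing is a mechanism that produces many almost orthogonal directions per scale. The paper uses quadratic chirps on the parabolic rectangles $I_R\times U_R$:
\begin{itemize}
\item a Schur-multiplier bound gives $\|F_R\|\lesssim R^{-1/2}$;
\item the local limit theorem gives $\|F_R\|_{\HS}^2\asymp R^{-1/2}$;
\item $\|X\|_1\ge\|X\|_{\HS}^2/\|X\|$ then gives order-one trace norm per scale, which pinching accumulates to $\sqrt J$.
\end{itemize}

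Your lacunary/partial-isometry idea can also be made to work, but only in a specific form. Take one spike per scale, $a_{16^k}=1/k$, so $a\in\ell^2$. Pair with the partial permutation whose entries are $1$ at $(16^k-u,u)$ for $4^k\le u<2\cdot4^k$ and $k\le K$. These windows have pairwise disjoint rows and pairwise disjoint columns, so the pairing matrix has norm one. The pairing equals $\sum_{k\le K}\frac1k\sum_{4^k\le u<2\cdot4^k}W(16^k-u,u)\gtrsim\log K$, because each windowed antidiagonal sum is of order one by the local limit theorem.

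Until you supply one of these arguments, and the edge computation identifying the limiting kernel, the proposal is a plan rather than a proof.
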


\begin{corollary} One has
\[
\mathcal T(L^\infty(\mathbb D))
\neq
L^\infty(\mathbb D)*_\pi\mathcal S_1(A^2(\mathbb D)).
\]
Moreover, \(L^\infty(\mathbb D)*_\pi\mathcal S_1(A^2(\mathbb D))\)
is a proper dense, and hence nonclosed, subspace of
\(\mathcal T(L^\infty(\mathbb D))\).
\end{corollary}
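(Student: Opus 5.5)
We need to prove the corollary: $\mathcal T(L^\infty)\neq L^\infty *_\pi \mathcal S_1$, and that the latter is a proper, dense, non-closed subspace.

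Recall the quantum convolution. For $a\in L^\infty(\D)$ and $T\in\Sone$,
\[
a*T=\int_\D a(z)\,U_zTU_z^*\,d\lambda(z),
\]
where $U_z$ are the Möbius-induced unitaries and $d\lambda$ is the invariant measure. The space $L^\infty*_\pi\Sone$ consists of the norm-convergent series $\sum_k a_k*T_k$ with
\[
\sum_k\|a_k\|_\infty\|T_k\|_{\Sone}<\infty.
\]
This is the image of the projective tensor product $L^\infty\widehat\otimes_\pi\Sone$ under the contractive map $a\otimes T\mapsto a*T$.

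\textbf{Inclusion and density.} Each $a*T$ lies in $\Top(L^\infty)$, since the integrand $U_zTU_z^*$ consists of compact operators and the Toeplitz algebra is $U_z$-invariant and contains $\Kop$. Density follows from two facts.
\begin{itemize}
\item Taking $T=$ the rank-one projection onto constants gives $a*T=T_a$ up to normalization.
\item Individual Toeplitz operators are norm dense in $\Top(L^\infty)$ by \cite{Xia}.
\end{itemize}

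\textbf{Uniform boundedness of $Q_m$ on the convolution space.} The key observation is that $Q_m$ is uniformly bounded on $L^\infty*_\pi\Sone$ with its projective norm. This rests on two standard facts.
\begin{itemize}
\item $B_m$ commutes with Möbius conjugation.
\item $T_{B_m(\cdot)}$ is itself a convolution: $Q_m(S)=B_m(S)*P_0$ with a fixed trace-class $P_0$. More usefully, $B_m(a*T)=a\star B_m(T)$, where $\star$ is function-function convolution on the group.
\end{itemize}
From these,
\[
Q_m(a*T)=a*Q_m(T)\quad\text{(in the appropriate sense)},
\]
and hence
\[
\|Q_m(a*T)\|\le\|a\|_\infty\,\|Q_m(T)\|_{\Sone}\le\|a\|_\infty\,\|T\|_{\Sone}.
\]
The last inequality holds because on $\Sone$ the map $Q_m$ is trace-norm contractive. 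Indeed, $B_m(T)\in L^1(d\lambda)$ with $\|B_m(T)\|_{L^1(\lambda)}\le\|T\|_{\Sone}$, since $B_m$ is an average of $U_zTU_z^*$ against positive rank-$(m+1)$-type states. Also $T_f$ for $f\in L^1(\lambda)$ has trace norm at most $\|f\|_{L^1(\lambda)}$. Summing over the series gives
\[
\sup_m\|Q_m(S)\|\le\|S\|_\pi\quad\text{for every } S\in L^\infty*_\pi\Sone.
\]

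\textbf{Properness and non-closedness.} Theorem~\ref{thm:main} gives a compact $S\in\Top(L^\infty)$ with $\sup_m\|Q_m(S)\|=\infty$. By the previous step, $S\notin L^\infty*_\pi\Sone$, so the inclusion is proper. A proper dense subspace cannot be norm closed, which completes the argument.

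An alternative route to non-closedness: if the subspace were closed, it would equal $\Top$. The open mapping theorem applied to the surjection from $L^\infty\widehat\otimes_\pi\Sone$ would then make the operator norm and the quotient projective norm equivalent. The uniform bound on $Q_m$ would then extend to all of $\Top$, contradicting Theorem~\ref{thm:main}.

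\textbf{Main obstacle.} The main difficulty is justifying rigorously the uniform estimate $\|Q_m(a*T)\|\le\|a\|_\infty\|T\|_{\Sone}$. This requires verifying the commutation of $B_m$ with convolution and the $L^1(\lambda)$ trace-class bound for $B_m(T)$, uniformly in $m$.
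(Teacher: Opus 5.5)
\textbf{There is a genuine gap.} It sits in the step you flag as the main obstacle: the claim that $Q_m$ is trace-norm contractive on $\Sone$, uniformly in $m$.

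Your justification treats $B_m$ as an average against positive states. But $B_m(T)(z)=\Tr(TU_z\Phi_mU_z)$ with $\Phi_m=(m+1)\sum_{j=0}^m(-1)^j\binom mj\frac1{j+1}E_{jj}$, whose eigenvalues alternate in sign, and $\|\Phi_m\|_1=2^{m+1}-1$. Already $B_m(E_{11})(0)=-m(m+1)/2<0$, while $\int_\D B_m(E_{11})\,d\lambda=\Tr(\Phi_m)=1$. So even the intermediate bound $\|B_m(T)\|_{L^1(\lambda)}\le\|T\|_1$ fails.

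More seriously, the conclusion itself is false, and the paper proves this. The operators $A_m=x\otimes e_m$ satisfy $\|A_m\|_1=1$, yet $\|Q_m(A_m)\|_1\ge\|Z_{J,m}\|_1$, which is unbounded by Lemma~\ref{lem:exact-convergence} and Theorem~\ref{thm:pascal-hankel}. Equivalently, the trace duality of Lemma~\ref{lem:trace-transpose} gives $\|Q_m\|_{\Sone\to\Sone}=\|Q_m\|_{\Kop\to\Bop}$. A uniform trace-norm bound for $Q_m$ is therefore exactly the negation of Proposition~\ref{prop:unbounded-restrictions}. Your estimate $\|Q_m(a*T)\|\le\|a\|_\infty\|T\|_1$ is thus unsupported for general $T\in\Sone$. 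Both your properness argument and your open-mapping variant depend on it, so neither stands as written.

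What survives is the radial case. For radial $T$ one has $Q_m(T)=\varphi_m*_\pi T$ with $\varphi_m=(m+1)(1-|z|^2)^{m+2}\ge0$ and $\int_\D\varphi_m\,d\lambda=1$, so $\|Q_m(T)\|_1\le\|T\|_1$. Combined with your identity $Q_m(a*T)=a*Q_m(T)$, this gives the uniform bound for elements built from radial trace-class generators. Your route would therefore go through if the class $L^\infty*_\pi\Sone$ can be generated by radial operators, but that depends on the convolution conventions of \cite{DawsonDewageMitkovskiOlafsson}, and you would have to establish it.

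The paper avoids the issue entirely. It quotes \cite[Theorem~6.11]{DawsonDewageMitkovskiOlafsson}, which gives $\|Q_m(R)-R\|\to0$ for every $R$ in the class. This is incompatible with $\|Q_{m_n}(S)-S\|\to\infty$ from Theorem~\ref{thm:main}, so $S$ is not in the class. Your density step (Toeplitz operators lie in the class, plus Xia's theorem) and your `proper dense implies not closed' step match the paper's.
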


Indeed, by \cite[Theorem 6.11]{DawsonDewageMitkovskiOlafsson}, norm
convergence \(Q_m(R)\to R\) holds for every
\[
R\in L^\infty(\mathbb D)*_\pi\mathcal S_1(A^2(\mathbb D)).
\]
The operator supplied by the previous theorem therefore does not belong to this
class. On the other hand, this class contains every Toeplitz operator
\(T_a=a*_\pi\Phi\), and Toeplitz operators are norm dense in
\(\mathcal T(L^\infty(\mathbb D))\) by Xia's theorem~\cite{Xia}.

The mechanism is a moving matrix edge.  Although $Q_m$ preserves each matrix
diagonal, the relevant diagonal changes with $m$: the input column is $q=m$,
while the input row remains in a fixed finite block.  Along this edge, exact
Bergman coefficients converge to a weighted Pascal kernel, so one rank-one
column produces a two-parameter weighted Hankel matrix. More explicitly, on the relevant moving edge we set
\[
p=r+u,\qquad q=m,\qquad k=m-u.
\]
The diagonal-conservation law then gives an output row \(r\), while the
corresponding coefficients satisfy
\[
c_m(r+u,m;m-u)\longrightarrow K(r+u,r)=W(r,u).
\]
Thus the row index \(r\) and the offset \(u\) become the two indices of the
limiting Pascal-weighted Hankel matrix.

The proof has three steps.  We first compute the exact coefficients and
establish the moving-edge limit
\[
   c_m(p,m;m+r-p)\longrightarrow K(p,r).
\]
We then prove a standalone Pascal--Hankel instability theorem using parabolic
blocks and quadratic chirps.  Classical Schatten-class Hankel theory usually
studies the unweighted matrix $[a_{r+u}]$ through regularity of its symbol; see,
for example, \cite{Peller}.  The weight arising here is strongly damping but
has a critical parabolic window, and its trace-class behavior is different.
Finally, trace duality and a transpose identity transfer the finite-section
growth to the exact maps and give
\[
   \sup_m\|Q_m:\Kop(\A)\longrightarrow\Bop(\A)\|=\infty.
\]
The Uniform Boundedness Principle then produces a single compact operator on
which the family is unbounded.

Sections~\ref{sec:coefficients}--\ref{sec:transfer} contain the proof.
Section~\ref{sec:radial} explains why radial approximation does not see the
obstruction,.

\section{Exact matrix coefficients and the moving edge}
\label{sec:coefficients}

For $m\in\Nzero$, the higher-order Berezin transform can be written in the
normalization
\[
 B_m(S)(z)
  =(m+1)(1-|z|^2)^{m+2}
    \sum_{j=0}^m(-1)^j\binom mj
       \big\langle S v_{j,z}^{(m)},v_{j,z}^{(m)}\big\rangle,
\]
where
\[
   v_{j,z}^{(m)}(w)=w^jK_z^{(m)}(w),
   \qquad
   K_z^{(m)}(w)=\frac{1}{(1-\overline zw)^{m+2}}.
\]
Equivalently, in the quantum-harmonic-analysis notation recalled below, it is
generated by the radial finite-rank operator
\[
   \Phi_m=(m+1)\sum_{j=0}^m(-1)^j\binom mj\frac{1}{j+1}E_{jj}
\]

\begin{lemma}\label{lem:exact-coeff}
Let $p,q,k\in\Nzero$ and put $r=k+p-q$.  If $r<0$, then
$Q_m(E_{pq})e_k=0$.  If $r\geq0$, then
\[
   Q_m(E_{pq})e_k=c_m(p,q;k)e_r,
\]
where
\begin{align}
c_m(p,q;k)
   &=(m+1)\sqrt{\frac{(k+1)(r+1)}{(p+1)(q+1)}} \notag\\
   &\quad\times
   \sum_{j=0}^{\min\{m,p,q\}}(-1)^j
       \binom mj
       \binom{m+1+p-j}{p-j} \notag\\
   &\qquad\qquad\times
       \binom{m+1+q-j}{q-j}
       B(p-j+k+1,m+3).                         \label{eq:exact-coeff}
\end{align}
In particular, an input entry in position $(p,q)$ can contribute only to
output positions $(r,k)$ satisfying $p+k=q+r$.  Thus every matrix diagonal is
invariant under $Q_m$.
\end{lemma}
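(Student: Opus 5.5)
The plan is a direct computation. First evaluate the function $B_m(E_{pq})$ explicitly from the displayed formula for $B_m$. Then compute the matrix entries $\langle T_{B_m(E_{pq})}e_k,e_r\rangle=\int_{\D}B_m(E_{pq})\,e_k\,\overline{e_r}\,dA$ in polar coordinates. The angular integration will give the diagonal-conservation law, and the radial integration will give the Beta factor.

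\emph{Step 1: the Berezin symbol.} Since $(x\otimes y)f=\langle f,y\rangle x$, we have
\[
\langle E_{pq}v,v\rangle=\langle v,e_q\rangle\,\langle e_p,v\rangle .
\]
Expand the kernel as
\[
K^{(m)}_z(w)=\sum_{n\ge0}\binom{m+1+n}{n}\bar z^{\,n}w^n .
\]
Then the coefficient of $w^p$ in $v^{(m)}_{j,z}$ is $\binom{m+1+p-j}{p-j}\bar z^{\,p-j}$, and it vanishes if $p<j$. Using $\|w^p\|^2=1/(p+1)$ and $e_p=\sqrt{p+1}\,w^p$, we get
\[
\langle e_p,v^{(m)}_{j,z}\rangle=\frac{1}{\sqrt{p+1}}\binom{m+1+p-j}{p-j}z^{p-j},
\qquad
\langle v^{(m)}_{j,z},e_q\rangle=\frac{1}{\sqrt{q+1}}\binom{m+1+q-j}{q-j}\bar z^{\,q-j}.
\]
Hence
\[
B_m(E_{pq})(z)=\frac{m+1}{\sqrt{(p+1)(q+1)}}(1-|z|^2)^{m+2}\sum_{j=0}^{\min\{m,p,q\}}(-1)^j\binom mj\binom{m+1+p-j}{p-j}\binom{m+1+q-j}{q-j}\,z^{p-j}\bar z^{\,q-j}.
\]
This is a finite sum of bounded functions on $\D$, so $T_{B_m(E_{pq})}$ is well defined and every interchange of sum and integral below is trivial.

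\emph{Step 2: the matrix entries.} For each $r'\in\Nzero$,
\[
\langle Q_m(E_{pq})e_k,e_{r'}\rangle=\sqrt{(k+1)(r'+1)}\int_{\D}B_m(E_{pq})(z)\,z^k\bar z^{\,r'}\,dA(z).
\]
A typical term carries the factor $z^{p-j+k}\bar z^{\,q-j+r'}$ times a radial function. Integrating in the angle kills the term unless $p-j+k=q-j+r'$, that is, unless $r'=k+p-q=r$. This index condition does not depend on $j$. So if $r<0$, every entry vanishes and $Q_m(E_{pq})e_k=0$. If $r\ge0$, the vector $Q_m(E_{pq})e_k$ is a multiple of $e_r$.

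For the radial part, use the normalization $dA=\pi^{-1}dx\,dy$ and substitute $t=|z|^2$. This gives
\[
\int_{\D}|z|^{2s}(1-|z|^2)^{m+2}\,dA=\int_0^1t^{s}(1-t)^{m+2}\,dt=B(s+1,m+3),
\]
with $s=p-j+k$. Collecting the constants yields exactly \eqref{eq:exact-coeff}. The last assertion, that output positions satisfy $p+k=q+r$, is just a restatement of $r=k+p-q$.

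There is no real obstacle here. The only care needed is bookkeeping: tracking which inner product carries the complex conjugate, so that $z^{p-j}$ and $\bar z^{\,q-j}$ appear with the correct orientation and produce the binomials in $p$ and $q$ as stated. One also has to note the truncation $j\le\min\{m,p,q\}$, which comes from the vanishing of the Taylor coefficients of $v^{(m)}_{j,z}$ below degree $j$.
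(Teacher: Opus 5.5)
Your proposal is correct and takes the same route as the paper. You expand $K_z^{(m)}$ to get the Taylor coefficients of $v^{(m)}_{j,z}$ and substitute them into $B_m(E_{pq})$; the angular integral then gives the conservation law $p-j+k=q-j+r$, and the radial integral $\int_0^1 t^{p-j+k}(1-t)^{m+2}\,dt$ gives the Beta factor. Your version simply writes out more of the bookkeeping than the paper does: where the complex conjugate sits, the normalization of $dA$, and the truncation $j\le\min\{m,p,q\}$.
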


\begin{proof}
Expanding $K_z^{(m)}$ in powers of $w$ gives, for $s\geq j$,
\[
   \big\langle v_{j,z}^{(m)},e_s\big\rangle
     =\frac{1}{\sqrt{s+1}}
       \binom{m+1+s-j}{s-j}\overline z^{\,s-j},
\]
and the coefficient is zero for $s<j$.  Substituting this into the definition
of $B_m(E_{pq})$ and then into the Toeplitz matrix coefficient gives an angular
integral.  It vanishes unless
\[
   p-j+k=q-j+r,
\]
which is the conservation law above.  The remaining radial integral is
\[
   \int_0^1 t^{p-j+k}(1-t)^{m+2}\,dt
      =B(p-j+k+1,m+3),
\]
and \eqref{eq:exact-coeff} follows.
\end{proof}

We next record the trace-transpose property used in the transfer argument.  To
make the calculation explicit, let $U_z$ be the standard change-of-variable
unitary on $\A$.  If $R$ is trace class and $S$ is bounded, write
\[
   (S\ast_\pi R)(z)=\Tr(SU_zRU_z).
\]
If $h\in L^\infty(\D)$, set
\[
   h\ast_\pi R
      =\int_{\D}h(z)U_zRU_z\,d\lambda(z),
   \qquad
   d\lambda(z)=\frac{dA(z)}{(1-|z|^2)^2},
\]
where the integral is taken weakly.  With $\Phi=1\otimes1$, one has
\[
 B_0(S)=S\ast_\pi\Phi,
 \qquad T_h=h\ast_\pi\Phi,
 \qquad B_m(S)=S\ast_\pi\Phi_m.
\]
The radial interchange identity for trace-class generators gives
\begin{equation}\label{eq:radial-interchange}
   (S\ast_\pi\Phi_m)\ast_\pi\Phi
      =(S\ast_\pi\Phi)\ast_\pi\Phi_m;
\end{equation}
see \cite[Lemma~3.11, Lemma~6.8, and Appendix~A.10]
{DawsonDewageMitkovskiOlafsson}.  These definitions and
\eqref{eq:radial-interchange} are the only pieces of convolution formalism used
below.

\begin{lemma}\label{lem:trace-transpose}
For finite-rank operators $A,S$,
\[
   \Tr(Q_m(S)A)=\Tr(SQ_m(A)).
\]
Equivalently, whenever $r=k+p-q\geq0$,
\[
   c_m(p,q;k)=c_m(k,r;p).
\]
\end{lemma}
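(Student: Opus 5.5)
Two routes are available: an abstract argument via the radial interchange identity \eqref{eq:radial-interchange}, and a direct check of the coefficient symmetry using Lemma~\ref{lem:exact-coeff}. I will give the abstract argument as the main proof and keep the coefficient computation as a cross-check. The goal is to show that $Q_m$ is symmetric with respect to the bilinear trace pairing $\langle S,A\rangle=\Tr(SA)$.

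\emph{Step 1 (Toeplitz duality).} For $h\in L^\infty(\D)$ and trace-class $A$, I will write $\Tr(T_hA)$ as $\int_{\D}h(z)\Tr(U_z\Phi U_zA)\,d\lambda(z)$, using the weak integral defining $T_h=h\ast_\pi\Phi$. Since $U_z$ is a self-adjoint unitary, cyclicity gives $\Tr(U_z\Phi U_zA)=\Tr(AU_z\Phi U_z)=(A\ast_\pi\Phi)(z)=B_0(A)(z)$. Hence
\[
\Tr(T_hA)=\int_{\D}h\,B_0(A)\,d\lambda .
\]
For finite-rank $A$, $B_0(A)$ is a finite sum of terms bounded by $(1-|z|^2)^{2}$ times a polynomial, so the integral converges.

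\emph{Step 2 (apply to $h=B_m(S)$).} Step 1 gives $\Tr(Q_m(S)A)=\int B_m(S)\,B_0(A)\,d\lambda$. The same computation with $\Phi_m$ in place of $\Phi$ shows that
\[
\int B_m(S)\,g\,d\lambda=\Tr\big(S\,(g\ast_\pi\Phi_m)\big)
\]
whenever $g$ decays suitably. Here I use that $\Phi_m$ is self-adjoint and diagonal, and that finite rank makes Fubini legitimate. Applying this with $g=B_0(A)$ yields $\Tr(Q_m(S)A)=\Tr\big(S\,((A\ast_\pi\Phi)\ast_\pi\Phi_m)\big)$. The interchange identity \eqref{eq:radial-interchange}, applied with $A$ in place of $S$, turns this into $\Tr\big(S\,((A\ast_\pi\Phi_m)\ast_\pi\Phi)\big)=\Tr(S\,T_{B_m(A)})=\Tr(SQ_m(A))$.

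\emph{Step 3 (coefficient form).} I take $S=E_{pq}$ and $A=E_{rk}$, and compute $\Tr(XE_{rk})=\langle Xe_r,e_k\rangle$. By Lemma~\ref{lem:exact-coeff}, the left side is $\Tr(Q_m(E_{pq})E_{rk})=\langle Q_m(E_{pq})e_r,e_k\rangle$. Since $Q_m$ is symmetric, it commutes with the transpose $E_{pq}\mapsto E_{qp}$, as the formula in Lemma~\ref{lem:exact-coeff} shows. Tracking indices this way converts the operator identity into $c_m(p,q;k)=c_m(k,r;p)$ with $r=k+p-q$. Conversely, this coefficient identity gives the operator identity for all finite-rank $A$ and $S$ by bilinearity.

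\emph{Main obstacle and cross-check.} The delicate step is the Fubini and integrability justification in Step 2, since $\Phi_m$ is not positive and the symbols involved are not bounded. If that justification becomes messy, I will instead verify $c_m(p,q;k)=c_m(k,r;p)$ directly from \eqref{eq:exact-coeff}. The prefactor $\sqrt{(k+1)(r+1)/((p+1)(q+1))}$ changes to its reciprocal under the swap, so the remaining sums must differ by the factor $(k+1)(r+1)/((p+1)(q+1))$. I would prove this by writing each sum as a single radial integral $\int_0^1 t^{p+k}(1-t)^{m+2}P(t)\,dt$ and identifying the polynomial via a hypergeometric (Pfaff/Chu--Vandermonde) transformation. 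I expect this combinatorial identity to be the hardest part of the direct route.
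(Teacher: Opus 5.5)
Your argument is correct and is essentially the paper's proof in mirror image. The paper first applies \eqref{eq:radial-interchange} to $S$ and pairs $B_0(S)$ with $B_m(A)$, whereas you pair $B_m(S)$ with $B_0(A)$ and apply the interchange to $A$ at the end. The Fubini step you flag as the main obstacle is harmless: $\|B_m(S)\|_\infty\le\|S\|\,\|\Phi_m\|_1$ and $\|B_0(A)\|_{L^1(d\lambda)}\le\|A\|_1\|\Phi\|_1$, whatever the sign of $\Phi_m$.

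In Step 3 the transpose detour is unnecessary, and the choice $A=E_{rk}$ does not give the identity as stated, since under $r=k+p-q$ both sides then vanish unless $p=q$. Take $A=E_{kr}$ instead and read off $\Tr(Q_m(E_{pq})E_{kr})=\langle Q_m(E_{pq})e_k,e_r\rangle=c_m(p,q;k)$ and $\Tr(E_{pq}Q_m(E_{kr}))=\langle Q_m(E_{kr})e_p,e_q\rangle=c_m(k,r;p)$ directly.
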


\begin{proof}
By \eqref{eq:radial-interchange},
\[
   Q_m(S)=(S\ast_\pi\Phi)\ast_\pi\Phi_m.
\]
The definition of the weak integral and cyclicity of the trace give
\begin{align*}
\Tr(Q_m(S)A)
 &=\int_{\D}(S\ast_\pi\Phi)(z)
      \Tr(U_z\Phi_mU_zA)\,d\lambda(z)\\
 &=\int_{\D}(S\ast_\pi\Phi)(z)
      (A\ast_\pi\Phi_m)(z)\,d\lambda(z)\\
 &=\Tr\!\left(S\big((A\ast_\pi\Phi_m)\ast_\pi\Phi\big)\right)
  =\Tr(SQ_m(A)).
\end{align*}
The integrals above are absolutely convergent. Indeed, the standard
convolution estimates (see~\cite[Appendix A]{DawsonDewageMitkovskiOlafsson}) give
\[
\|S *_\pi \Phi\|_\infty
   \leq \|S\|\,\|\Phi\|_1,
\qquad
\|A *_\pi \Phi_m\|_{L^1(d\lambda)}
   \leq \|A\|_1\,\|\Phi_m\|_1.
\]
Hence the product appearing in the preceding integral belongs to
\(L^1(d\lambda)\).  Taking $S=E_{pq}$ and
$A=E_{kr}$ gives the coefficient symmetry.
\end{proof}

The counterexample comes from a moving matrix edge: keep the input row $p$ and
output row $r$ fixed, put the input column at $q=m$, and let $m$ tend to
infinity.

\begin{lemma}\label{lem:finite-difference}
Fix $p,r\in\Nzero$.  Define
\[
   g_m(x)=(x+1)_{p+1}
       \frac{\Gamma(m+x+2)\Gamma(x+r+1)}
            {\Gamma(x+1)\Gamma(m+x+r+4)},
\]
where $(x+1)_{p+1}=(x+1)\cdots(x+p+1)$, and let
$\nabla f(x)=f(x)-f(x-1)$.  Then, for all sufficiently large $m$,
\[
 c_m(p,m;m+r-p)
   =\sqrt{\frac{(m+r-p+1)(r+1)}{(p+1)(m+1)}}
      \frac{m+2}{p!}\,\nabla^p g_m(m).
\]
\end{lemma}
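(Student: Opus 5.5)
The identity follows by inserting $q=m$ and $k=m+r-p$ into the exact formula \eqref{eq:exact-coeff} of Lemma~\ref{lem:exact-coeff} and matching the resulting sum with the expansion of the backward difference. No asymptotics are needed.

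\textbf{Step 1: reduce the sum.} Take $m\geq p$; this is all that ``sufficiently large'' requires, and it also makes $k=m+r-p\geq0$. Then $\min\{m,p,q\}=p$, so the sum in \eqref{eq:exact-coeff} runs over $0\le j\le p$. Note that $r=k+p-q$ is exactly the fixed $r$, so the square-root prefactor in \eqref{eq:exact-coeff} is the one in the statement. The Beta factor becomes
\[
B(m+r-j+1,m+3)=\frac{(m+r-j)!\,(m+2)!}{(2m+r-j+3)!}.
\]

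\textbf{Step 2: expand the backward difference.} Recall
\[
\nabla^p f(m)=\sum_{j=0}^p(-1)^j\binom pj f(m-j).
\]
It therefore suffices to prove, for each $0\le j\le p$, the termwise identity
\begin{multline*}
(m+1)\binom mj\binom{m+1+p-j}{p-j}\binom{2m+1-j}{m-j}
\frac{(m+r-j)!\,(m+2)!}{(2m+r-j+3)!}\\
=\frac{m+2}{p!}\binom pj\, g_m(m-j).
\end{multline*}

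\textbf{Step 3: verify the termwise identity.} All Gamma arguments at $x=m-j$ are positive integers, so
\[
g_m(m-j)=\frac{(m-j+p+1)!}{(m-j)!}\cdot\frac{(2m-j+1)!\,(m-j+r)!}{(m-j)!\,(2m-j+r+3)!},
\]
using $(m-j+1)_{p+1}=(m-j+p+1)!/(m-j)!$.

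On the left, expand the three binomials into factorials. The factors $(m+1)\,m!\,(m+2)!/\big((m+1)!\,(m+1)!\big)$ collapse to $m+2$. The left side then becomes
\[
(m+2)\,\frac{(m+1+p-j)!\,(2m+1-j)!\,(m+r-j)!}{j!\,(p-j)!\,\big((m-j)!\big)^2\,(2m+r-j+3)!}.
\]
On the right, $\binom pj/p!=1/\big(j!(p-j)!\big)$. Multiplying by the expression for $g_m(m-j)$ gives the same quantity.

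Summing over $j$ with the signs $(-1)^j$ and restoring the common prefactor yields the claimed formula. The only point needing care is the bookkeeping in Step~3, in particular checking that the prefactor $m+2$ and the $1/p!$ normalization come out exactly. There is no conceptual obstacle.
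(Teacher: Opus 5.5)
Your proposal is correct and is essentially the paper's proof: substitute $q=m$, $k=m+r-p$ into \eqref{eq:exact-coeff}, note that the sum stops at $j=p$ once $m\ge p$, and identify the $j$th summand (with the prefactor $m+1$ absorbed) as $(m+2)(-1)^j g_m(m-j)/(j!(p-j)!)$. You carry out explicitly the factorial bookkeeping that the paper compresses into ``after simplifying,'' and your threshold $m\ge p$ correctly makes precise what ``sufficiently large'' means.
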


\begin{proof}
Set $q=m$ and $k=m+r-p$ in \eqref{eq:exact-coeff}.  Since $p$ is fixed, the
sum ends at $j=p$.  After simplifying the beta function and binomial
coefficients, the $j$th summand is
\[
   (m+2)\frac{(-1)^j}{j!(p-j)!}\,g_m(m-j).
\]
Using $1/(j!(p-j)!)=\binom pj/p!$ gives the stated backward difference.
\end{proof}

\begin{lemma}\label{lem:pascal-kernel}
For every fixed $p,r\in\Nzero$,
\[
   \lim_{m\to\infty}c_m(p,m;m+r-p)=K(p,r),
\]
where
\[
   K(p,r)=
   \frac{(p+r+1)!}
        {2^{p+r+2}p!r!\sqrt{(p+1)(r+1)}}.
\]
In fact,
\[
   c_m(p,m;m+r-p)=K(p,r)+O_{p,r}(m^{-1}).
\]
\end{lemma}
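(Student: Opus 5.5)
We start from Lemma~\ref{lem:finite-difference}. The prefactor satisfies
\[
   \sqrt{\frac{(m+r-p+1)(r+1)}{(p+1)(m+1)}}=\sqrt{\frac{r+1}{p+1}}\bigl(1+O(m^{-1})\bigr),
\]
so everything reduces to the asymptotics of $(m+2)\nabla^p g_m(m)$. Using the Gamma-ratio expansion we write
\[
   \frac{\Gamma(m+x+2)}{\Gamma(m+x+r+4)}=\frac{1}{(m+x+2)_{r+2}},
\]
so that
\[
   g_m(x)=\frac{(x+1)_{p+1}(x+1)_r}{(m+x+2)_{r+2}}.
\]
This is a rational function of $x$. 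The numerator is a polynomial $P(x)=(x+1)_{p+1}(x+1)_r$ of degree $p+r+1$, and the denominator is a polynomial of degree $r+2$ in $m+x$.

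The plan is to evaluate the $p$-th backward difference at $x=m$ exactly, or at least to leading order, by writing it as a finite sum
\[
   \nabla^p g_m(m)=\sum_{j=0}^p(-1)^j\binom pj\, g_m(m-j).
\]
We then substitute $x=m-j=m(1-j/m)$ and expand in powers of $1/m$. Since $g_m(x)$ is of order $m^{p+r+1}/m^{r+2}=m^{p-1}$ and $p$ differences each cost a factor $m^{-1}$ when the function is smooth on scale $m$, the naive size of $\nabla^p g_m(m)$ is $m^{-1}$. This matches the factor $m+2$ and yields an $O(1)$ limit.

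To extract the constant, we would write $g_m(x)=m^{p-1}G(x/m)+O(m^{p-2})$ uniformly for $x\in[m-p,m]$. Here
\[
   G(t)=\frac{t^{p+r+1}}{(1+t)^{r+2}}
\]
is the leading-order homogeneous part. Then
\[
   \nabla^p g_m(m)=m^{p-1}m^{-p}G^{(p)}(1)+O(m^{-2}),
\]
by Taylor expansion of the difference operator.

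This step is the main obstacle. The lower-order terms of $g_m$, namely $m^{p-1-i}G_i(x/m)$ with $i\ge1$, contribute
\[
   m^{p-1-i}m^{-p}G_i^{(p)}(1)=O(m^{-1-i}),
\]
so they are harmless. The point is to confirm this bookkeeping rigorously. Since $g_m$ is a rational function with explicit structure, one clean route is to use the homogeneity in $(x,m)$: each $g_m(x)$ is a sum of homogeneous rational functions of $(x,m)$ of degrees $p-1,p-2,\dots$. The $p$-th difference in $x$ of a homogeneous function of degree $d$, evaluated on the diagonal $x=m$, is $m^{d-p}$ times a constant plus lower order. This gives
\[
   c_m=\sqrt{\tfrac{r+1}{p+1}}\,\frac{1}{p!}\,G^{(p)}(1)+O(m^{-1}).
\]

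It remains to identify $\frac{1}{p!}G^{(p)}(1)$ with
\[
   \frac{(p+r+1)!\sqrt{p+1}}{2^{p+r+2}p!\,r!\,\sqrt{r+1}}\cdot\frac{1}{p+1}.
\]
Equivalently, we need
\[
   \frac{1}{p!}G^{(p)}(1)=\frac{(p+r+1)!}{2^{p+r+2}\,p!\,(r+1)!}\cdot\frac{r+1}{p+1}\cdots.
\]
We would verify this by computing the Taylor coefficient of $t^{p+r+1}(1+t)^{-r-2}$ at $t=1$. Substituting $t=1+s$ gives
\[
   (1+s)^{p+r+1}(2+s)^{-r-2}=2^{-r-2}(1+s)^{p+r+1}(1+s/2)^{-r-2},
\]
and the coefficient of $s^p$ is a finite convolution sum
\[
   2^{-r-2}\sum_{i}\binom{p+r+1}{p-i}\binom{-r-2}{i}2^{-i}.
\]
We would evaluate this sum by a Vandermonde/hypergeometric identity, or by a generating-function check. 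This closed-form evaluation is where we expect the real work to lie. A sanity check at $p=0$ gives $G(1)=2^{-r-2}$, which matches
\[
   K(0,r)\sqrt{\frac{p+1}{r+1}}=\frac{(r+1)!}{2^{r+2}r!(r+1)}=2^{-r-2}.
\]
This confirms the normalization, and symmetry $K(p,r)=K(r,p)$ (consistent with Lemma~\ref{lem:trace-transpose}) offers a further check.
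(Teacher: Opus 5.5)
Your plan is the paper's: rescale via $G_m(y)=m^{1-p}g_m(my)$, compare with your $G(y)=y^{p+r+1}(1+y)^{-(r+2)}$, read off $(m+2)\nabla^pg_m(m)\approx G^{(p)}(1)$, and multiply by the limit $\sqrt{(r+1)/(p+1)}$ of the prefactor. But the two steps you flag as ``the main obstacle'' and ``the real work'' are the content of the lemma, and neither is done.

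First, the inference from a uniform bound $g_m(x)=m^{p-1}G(x/m)+O(m^{p-2})$ on $[m-p,m]$ to $\nabla^pg_m(m)=m^{-1}G^{(p)}(1)+O(m^{-2})$ is false as stated. $\nabla^p$ controls a sup-norm error only up to a factor $2^p$, and $m^{p-2}$ is not small compared with the main term $m^{-1}$ once $p\ge1$. You need the rescaled error to be small in $C^p$, not in sup norm. Your homogeneous-expansion idea aims at this, but it is an infinite expansion of $1/(m+x+2)_{r+2}$ whose remainder you would still have to control in $C^p$. The paper gets this directly. The function $G_m(y)=\prod_{a=1}^{p+1}(y+\tfrac am)\prod_{b=1}^{r}(y+\tfrac bm)\big/\prod_{c=2}^{r+3}(1+y+\tfrac cm)$ is jointly smooth in $(y,1/m)$ on $[1/2,3/2]$. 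Hence $G_m\to G$ in $C^p$ with $O(m^{-1})$ error, and the exact identity
\[
\frac{\nabla_h^pf(1)}{h^p}=\int_{[0,1]^p}f^{(p)}\bigl(1-h(t_1+\cdots+t_p)\bigr)\,dt,\qquad h=\frac1m,
\]
turns this into $(m+2)\nabla^pg_m(m)=G^{(p)}(1)+O(m^{-1})$.

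Second, you never compute the constant, and your stated target is wrong. The first quantity you propose to match equals $K(p,r)$ itself. The ``equivalently'' line carries a spurious factor $\frac{r+1}{p+1}$. Neither is consistent with your $p=0$ check, which uses the correct normalization. What must be shown is
\[
\frac{G^{(p)}(1)}{p!}=[s^p]\,\frac{(1+s)^{p+r+1}}{(2+s)^{r+2}}=\frac{1}{2^{p+r+2}}\binom{p+r+1}{p}=\frac{(p+r+1)!}{2^{p+r+2}\,p!\,(r+1)!}.
\]
Because of the weights $2^{-i}$, your convolution sum is not a plain Vandermonde sum, so you need an actual idea here. One clean proof substitutes $1+s=(1-w)^{-1}$ in the Cauchy integral for the coefficient. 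Then $2+s=(2-w)/(1-w)$ and $ds=(1-w)^{-2}\,dw$. Every power of $1-w$ cancels, since $-(p+r+1)+(p+1)+(r+2)-2=0$. The integrand becomes $w^{-p-1}(2-w)^{-(r+2)}$, whose residue at $0$ is $2^{-(r+2)}2^{-p}\binom{p+r+1}{p}$. Multiplying by $\sqrt{(r+1)/(p+1)}$ then gives exactly $K(p,r)$. Without this identity, or an equivalent one, the proposal does not establish the value of the limit.
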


\begin{proof}
Put $G_m(y)=m^{1-p}g_m(my)$.  Because $p,r$ are nonnegative integers, the
gamma ratios reduce exactly to finite products:
\[
   G_m(y)=
   \frac{
      \prod_{a=1}^{p+1}\left(y+\frac am\right)
      \prod_{b=1}^{r}\left(y+\frac bm\right)}
   {
      \prod_{c=2}^{r+3}\left(1+y+\frac cm\right)}.
\]
Hence, on $[1/2,3/2]$,
\[
   G_m\longrightarrow
   \phi(y):=\frac{y^{p+r+1}}{(1+y)^{r+2}}
   \quad\text{in }C^p,
\]
with an $O_{p,r}(m^{-1})$ error in $C^p$.

Let $h=1/m$ and write $\nabla_hf(y)=f(y)-f(y-h)$.  The exact identity
\[
   \frac{\nabla_h^pf(1)}{h^p}
      =\int_{[0,1]^p}
        f^{(p)}\!\left(1-h(t_1+\cdots+t_p)\right)
        \,dt_1\cdots dt_p
\]
shows that
\[
   \frac{\nabla_h^pG_m(1)}{h^p}\longrightarrow\phi^{(p)}(1).
\]
Since
\[
   (m+2)\nabla^pg_m(m)
    =\frac{m+2}{m}\frac{\nabla_h^pG_m(1)}{h^p},
\]
we obtain $(m+2)\nabla^pg_m(m)\to\phi^{(p)}(1)$.  Finally,
\[
   \frac{\phi^{(p)}(1)}{p!}
    =[t^p]\frac{(1+t)^{p+r+1}}{(2+t)^{r+2}}
    =\frac1{2^{p+r+2}}\binom{p+r+1}{p},
\]
and therefore
\[
   \phi^{(p)}(1)=\frac{(p+r+1)!}{2^{p+r+2}(r+1)!}.
\]
The square-root factor in Lemma~\ref{lem:finite-difference} tends to
$\sqrt{(r+1)/(p+1)}$.  Combining the two limits gives the formula for
$K(p,r)$.  The $O_{p,r}(m^{-1})$ statement follows from the $C^p$ product
estimate above.
\end{proof}

\begin{remark}\label{rem:order-limits}
The order of limits is important.  Lemma~\ref{lem:pascal-kernel} is a
fixed-index statement.  Later we first fix a finite block and then let
$m\to\infty$; no uniformity in growing $p$ or $r$ is required.
\end{remark}

\section{A Pascal--Hankel instability theorem}
\label{sec:pascal-hankel}

The matrix appearing at the moving edge can be written without Bergman
notation.  Define
\[
 W(r,u)
   :=2^{-(2r+u+2)}\binom{2r+u+1}{r}
      \sqrt{\frac{r+u+1}{r+1}},
   \qquad r,u\geq0.
\]
Notice that $W(r,u)=K(r+u,r)$.  For a sequence $a=(a_n)_{n\geq0}$, consider
the Pascal-weighted Hankel matrix
\[
   \mathcal P(a)(r,u)=a_{r+u}W(r,u).
\]
For finite $I,U\subset\Nzero$, write $P_I$ and $P_U$ for the corresponding
coordinate projections.  Rectangular finite matrices are regarded as
operators on $\ell^2(\Nzero)$ by extending them by zero.

\begin{theorem}\label{thm:pascal-hankel}
There exists $a\in\ell^2(\Nzero)$ for which the trace norms of the rectangular
coordinate compressions of $\mathcal P(a)$ are unbounded.  Equivalently,
\[
   \sup_{\substack{I,U\subset\Nzero\\I,U\ \mathrm{finite}}}
   \big\|[a_{r+u}W(r,u)]_{r\in I,\,u\in U}\big\|_1=\infty.
\]
In particular, the formal correspondence \(a\mapsto P(a)\), initially
specified through its finite coordinate compressions, does not extend to a
bounded map
\[
\ell^2(\mathbb N_0)\longrightarrow\mathcal S_1.
\]
\end{theorem}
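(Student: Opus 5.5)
The plan is to prove Theorem~\ref{thm:pascal-hankel} by trace duality. We build $a$ as a sparse superposition of normalized quadratic chirps, one per scale. Each chirp contributes a compressed block of trace norm bounded below, and the blocks decouple under a block-diagonal test matrix.

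\emph{Step 1: the parabolic window.} Write $n=2r+u+1$. The de Moivre--Laplace (Stirling) estimate for $2^{-n}\binom nr$ gives, uniformly for $R\le r\le 2R$ and $u\le C\sqrt R$,
\[
W(r,u)=\frac{1+o(1)}{2\sqrt{\pi r}}\,e^{-u^2/(4r)},
\]
together with the difference bounds
\[
|W(r+1,u)-W(r,u)|\lesssim R^{-3/2},\qquad |W(r,u+1)-W(r,u)|\lesssim R^{-1}.
\]
So on the parabolic box
\[
I_R=[R,2R],\qquad U_R=[\tfrac12\sqrt R,\sqrt R]
\]
the weight is comparable to $R^{-1/2}$ and varies slowly. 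This is the only input about $W$ that I intend to use.

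\emph{Step 2: one block has trace norm $\gtrsim 1$.} Set $N_R=[R,\,2R+\sqrt R]$ and
\[
a^{(R)}_n=R^{-1/2}e^{i\alpha n^2}\mathbf 1_{N_R}(n),\qquad \alpha=\pi/R,
\]
so that $\|a^{(R)}\|_2\asymp 1$. Expanding
\[
e^{i\alpha(r+u)^2}=e^{i\alpha r^2}\,e^{2i\alpha ru}\,e^{i\alpha u^2}
\]
shows that the compression $M_R=[a_{r+u}W(r,u)]_{I_R\times U_R}$ equals $D_1\widetilde M D_2$. Here $D_1,D_2$ are unimodular diagonal matrices and $\widetilde M(r,u)=R^{-1/2}e^{2\pi i ru/R}W(r,u)$. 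The Gram matrix of the columns of $\widetilde M$ has diagonal entries $\sum_r R^{-1}W(r,u)^2\asymp R^{-1}$. Its off-diagonal entries are exponential sums $\sum_r e^{2\pi i r(u-u')/R}R^{-1}W(r,u)W(r,u')$.

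I would bound these off-diagonal entries by summation by parts, using the $r$-difference bound from Step 1 and $|u-u'|\le\sqrt R<R$. This gives $O(R^{-1}/|u-u'|)$, up to endpoint terms that I would handle by a smooth cutoff in $r$ or by shrinking the box. I expect this off-diagonal estimate to be the main obstacle. The harmonic tail $\sum 1/|u-u'|\sim\log R$ is too large for a crude Schur bound. I would therefore either sharpen the estimate using the smooth cutoff, which makes the decay faster, or retain only a $\delta$-separated sublattice of columns.

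Assuming the estimate, the Gram matrix is $\asymp R^{-1}\,\id$ on $\asymp\sqrt R$ columns. Hence $M_R$ has $\gtrsim\sqrt R$ singular values $\gtrsim R^{-1/2}$, and $\|M_R\|_1\ge c>0$ uniformly in $R$. An alternative to the spectral argument is to pair directly: $\|M_R\|_1\ge|\Tr(M_RB)|$ with $B=R^{-1/2}\,\overline{\widetilde M}^{\,T}$-type normalized phases, bounding $\|B\|\lesssim1$ by the same exponential-sum estimate.

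\emph{Step 3: gluing scales.} Choose $R_{j+1}\ge R_j^{4}$, put $c_j=1/j$, and define $a=\sum_j c_ja^{(R_j)}$. Then $a\in\ell^2$ because the supports $N_{R_j}$ are disjoint. Take $I=\bigcup_{j\le J}I_{R_j}$ and $U=\bigcup_{j\le J}U_{R_j}$; both families are pairwise disjoint by sparsity. Let $B=\bigoplus_j B_j$, where each $B_j$ has norm $1$ and is supported on $U_{R_j}\times I_{R_j}$ and norms the $j$th diagonal block. Then $\|B\|\le1$, and $\Tr(MB)$ only sees diagonal blocks.

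In a diagonal block, $r+u\in N_{R_j}$, so the entries there are exactly $c_j(M_{R_j})$; the cross terms from other scales, whether $a_{r+u}$ vanishes or not, never enter the pairing. Hence
\[
\|M\|_1\ge\sum_{j\le J}c_j\|M_{R_j}\|_1\ge c\sum_{j\le J}\frac1j\to\infty.
\]
Finally, if $a\mapsto\mathcal P(a)$ extended to a bounded map $\ell^2\to\Sone$, then every coordinate compression would have trace norm at most $\|\mathcal P(a)\|_1$, because compressions by projections are contractive on $\Sone$. This contradicts the unboundedness just shown.
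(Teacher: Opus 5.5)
The gluing in Step~3 is sound. Your block-diagonal test operator $B=\bigoplus_j B_j$ does the job of the paper's pinching $\mathcal E_J$, and weighting the scales by $c_j$ gives a single $a$ directly, with no Uniform Boundedness Principle needed. The gap is in Step~2, which is the heart of the theorem. You assume the off-diagonal Gram estimate rather than prove it, and what you do establish is not enough.

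\begin{itemize}
\item \textbf{The crude estimate loses a logarithm.} The bound $|G(u,u')|\lesssim R^{-1}/|u-u'|$ with Schur's test gives only $\|M_R\|\lesssim R^{-1/2}\sqrt{\log R}$, hence only $\|M_R\|_1\gtrsim(\log R)^{-1/2}$. Each scale contributes only order one, so with your parameters this loss is fatal: $R_{j+1}\geq R_j^4$ forces $\log R_j\gtrsim 4^j$, and then $\sum_j j^{-1}(\log R_j)^{-1/2}<\infty$.
\item \textbf{The sublattice does not help.} Spacing $L$ divides the Hilbert--Schmidt mass by $L$, while the harmonic row sum is still $\asymp L^{-1}\log R$. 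The resulting bound $(L(L+\log R))^{-1/2}$ is best at $L=1$.
\item \textbf{The smooth cutoff can work but needs more.} Multiply on the left by the contraction $\operatorname{diag}(\psi(r/R))$, sum by parts twice, and use that second $r$-differences of $W$ are $O(R^{-5/2})$; this follows from the exact ratio $W(r+1,u)/W(r,u)$. That is input beyond Step~1, and you do not carry it out. Only $\|G\|\lesssim R^{-1}$ is needed (via $\|X\|_1\geq\|X\|_{\HS}^2/\|X\|$), not $G\asymp R^{-1}\id$.
\item \textbf{The loss is survivable with other parameters.} Take geometric scales $R_j=16^j$, which already separate the boxes, and weights $c_j=j^{-1/2}/\log(j+1)$. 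Then $\sum c_j^2<\infty$ but $\sum_j c_j j^{-1/2}=\infty$.
\end{itemize}

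The simplest repair is your own ``alternative'' pairing, provided $B$ carries only the phases and no weight. Put $B(u,r)=R^{-1/2}e^{-2\pi i ru/R}$ for $u\in U_R$ and $r\in\{R,\dots,2R-1\}$. Since $0<|u-v|<R$, every off-diagonal entry of $BB^*$ is a complete geometric sum. Hence $BB^*=\id$ exactly and $\|B\|=1$, with no weighted exponential sum to estimate. Meanwhile $\Tr(\widetilde M B)=R^{-1}\sum_{r,u}W(r,u)\gtrsim R^{-1}\cdot R\cdot\sqrt R\cdot R^{-1/2}=1$, by positivity of $W$ and the lower bound $W\gtrsim R^{-1/2}$ from Step~1. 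So $\|M_R\|_1\gtrsim 1$.

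This is also shorter than the paper's route. The paper bounds $\|F_R\|\lesssim R^{-1/2}$ using the Schur-multiplier estimate for $(Rw)$ (Lemma~\ref{lem:schur}, via the Fourier representation of $\cos(t/2)^N$) together with $H_R^*H_R=R\id$, and then applies $\|X\|_1\geq\|X\|_{\HS}^2/\|X\|$. Pairing $F_R$ directly with $R^{-1/2}H_R^*$ gives the same one-scale bound without Lemma~\ref{lem:schur}.
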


The proof uses one elementary finite-dimensional observation: a rectangular
matrix with many orthogonal columns can have small operator norm but order-one
trace norm even when all its entries are small.

Fix a scale $R=16^j$, $j\geq1$, and define the parabolic rectangle
\[
 I_R=\{R,R+1,\ldots,2R-1\},
 \qquad
 U_R=\{\sqrt R,\sqrt R+1,\ldots,2\sqrt R-1\}.
\]
Put
\[
   \Pi_R=I_R+U_R
      =\{R+\sqrt R,\ldots,2R+2\sqrt R-2\}.
\]
We first remove the normalization naturally carried by an $\ell^2$ coefficient
sequence.  Set
\[
 w(r,u):=\frac{W(r,u)}{\sqrt{r+u+1}}
      =2^{-(2r+u+2)}\frac1{\sqrt{r+1}}
         \binom{2r+u+1}{r}.
\]

\begin{lemma}\label{lem:flatness}
There are absolute constants $0<c<C<\infty$ such that, for every $R=16^j$
and every $(r,u)\in I_R\times U_R$,
\[
   \frac cR\leq w(r,u)\leq\frac CR.
\]
Consequently,
\[
   \sum_{r\in I_R}\sum_{u\in U_R}w(r,u)^2\asymp R^{-1/2}.
\]
\end{lemma}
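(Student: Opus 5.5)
The idea is to read $w(r,u)$ as a rescaled point probability of a symmetric binomial distribution and apply the local central limit theorem. Write $n=2r+u+1$, so that
\[
   w(r,u)=\frac{1}{2\sqrt{r+1}}\cdot 2^{-n}\binom{n}{r}.
\]
The factor $2^{-n}\binom nr$ is the probability that a $\mathrm{Bin}(n,1/2)$ variable equals $r$. Its mean is $n/2=r+(u+1)/2$, so $r$ sits at distance $(u+1)/2$ below the mean. On $I_R\times U_R$ we have $n\in[2R+\sqrt R+1,\,4R+2\sqrt R]$, so $n\asymp R$ and the deviation $(u+1)/2\asymp\sqrt R\asymp\sqrt n$. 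This is exactly the parabolic window in which the binomial probability has the Gaussian size $n^{-1/2}$, neither larger nor exponentially smaller.

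To get the two-sided bound I would use Stirling's formula with explicit error, e.g. Robbins' bounds $\sqrt{2\pi N}(N/e)^N\le N!\le \sqrt{2\pi N}(N/e)^Ne^{1/(12N)}$. Put $r=n/2-d$ with $d=(u+1)/2$. Then
\[
   2^{-n}\binom{n}{r}=\sqrt{\frac{2}{\pi n}}\,(1+O(1/n))\left(1-\tfrac{4d^2}{n^2}\right)^{-1/2}\exp\!\big(-nH(d/n)\big),
\]
where $H(x)=(\tfrac12+x)\log(1+2x)+(\tfrac12-x)\log(1-2x)=2x^2+O(x^4)$. Here $d/n=O(R^{-1/2})$, so $nH(d/n)=2d^2/n+O(d^4/n^3)$. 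On the rectangle, $2d^2/n$ lies in a fixed interval, roughly between $1/8$ and $2$, and $d^4/n^3=O(R^{-1})$. Hence the exponential factor is bounded above and below by absolute constants, and the same holds for the prefactor corrections. This gives $2^{-n}\binom nr\asymp n^{-1/2}\asymp R^{-1/2}$. Combined with $1/(2\sqrt{r+1})\asymp R^{-1/2}$ for $r\in[R,2R)$, we get $w\asymp R^{-1}$ with absolute constants.

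Uniformity over all $R=16^j$, $j\ge1$, needs a little care because the Stirling errors are only asymptotic. I would handle this in either of two ways. One option is to use Robbins' inequalities, which hold for all $N\ge1$, and note that all arguments $n,r,n-r$ are at least $R\ge16$. The other is to observe that for each of the finitely many small $R$ the finitely many values $w(r,u)$ are positive, so the constants can simply be shrunk or enlarged. To keep the expansion of $H$ honest, I would also use $d/n\le 1/8$, which is valid since $u<2\sqrt R$ and $n>2R$; on that range $|H(x)-2x^2|\le Cx^4$ follows from Taylor's theorem.

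The consequence then follows at once. We have $|I_R|=R$ and $|U_R|=\sqrt R$, so the double sum has $R^{3/2}$ terms, each of size $\asymp R^{-2}$, and therefore
\[
   \sum_{r\in I_R}\sum_{u\in U_R}w(r,u)^2\asymp R^{3/2}\cdot R^{-2}=R^{-1/2}.
\]
The main obstacle is the careful but routine bookkeeping in the Stirling step, in particular checking that the Gaussian exponent $2d^2/n$ stays bounded on the whole rectangle, including its corners. At the extreme corner $u\approx2\sqrt R$, $r\approx R$ it is about $(\sqrt R)^2\cdot2/(2R)=1$, so it is bounded, which is why the parabolic scaling $|U_R|\sim\sqrt{|I_R|}$ is the critical choice.
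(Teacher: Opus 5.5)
Your proof is correct and follows the same route as the paper. Both write $w(r,u)=\frac{1}{2\sqrt{r+1}}\,2^{-N}\binom{N}{r}$ with $N=2r+u+1$, observe that $N\asymp R$ and $|r-N/2|=(u+1)/2\asymp\sqrt R$, and conclude that $2^{-N}\binom{N}{r}\asymp R^{-1/2}$ uniformly on $I_R\times U_R$. The only difference is that the paper cites the uniform local central limit estimate at that point, whereas you derive it explicitly from Robbins' form of Stirling's formula, using $d/n\le 1/8$ and the boundedness of $2d^2/n$ on the whole rectangle.
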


\begin{proof}
Let $N=2r+u+1$.  Then
\[
   w(r,u)=\frac1{2\sqrt{r+1}}\,2^{-N}\binom Nr.
\]
On $I_R\times U_R$ we have $N\asymp R$, while
\[
   \left|r-\frac N2\right|=\frac{u+1}{2}\asymp\sqrt R.
\]
The uniform local central-limit estimate for the symmetric binomial
distribution gives absolute constants \(c_0,C_0>0\) such that
\[
c_0R^{-1/2}
\leq
2^{-N}\binom{N}{r}
\leq
C_0R^{-1/2}
\]
for every \((r,u)\in I_R\times U_R\).  Since $r\asymp R$, this yields
$w(r,u)\asymp R^{-1}$.  The rectangle has $R\sqrt R$ entries, giving the
Hilbert--Schmidt estimate.
\end{proof}

\begin{lemma}\label{lem:schur}
On $I_R\times U_R$, the scalar matrix $(Rw(r,u))$ has Schur-multiplier norm
bounded by an absolute constant, uniformly in $R=16^j$.
\end{lemma}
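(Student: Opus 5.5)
The plan is to factor $Rw(r,u)$ into a smooth Gaussian profile and a correction that is a product of functions of $r$ alone and $u$ alone, then bound the Schur norm of each piece. First, since $\|[\alpha_r\beta_u m(r,u)]\|_{\mathrm{Schur}}\le \sup|\alpha|\sup|\beta|\,\|m\|_{\mathrm{Schur}}$, any factor depending only on $r$ or only on $u$ that is bounded above and below can be dropped. Write
\[
   2^{-N}\binom Nr=\sqrt{\tfrac{2}{\pi N}}\exp\!\Big(-\tfrac{(u+1)^2}{2N}\Big)\big(1+\varepsilon(r,u)\big),
   \qquad N=2r+u+1,
\]
with $|\varepsilon|\lesssim R^{-1/2}$ by Stirling, because on $I_R\times U_R$ the deviation $|r-N/2|\asymp\sqrt R$ is of the order of the standard deviation. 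Then
\[
   Rw(r,u)=\frac{R}{\sqrt{2\pi(r+1)N}}\,\exp\!\Big(-\tfrac{(u+1)^2}{2N}\Big)\big(1+\varepsilon\big).
\]

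Next I will estimate each factor. The prefactor $R/\sqrt{(r+1)N}$ is smooth in $(r,u)$ and $\asymp1$. The exponent is $(u+1)^2/(2(2r+u+1))$, and its dependence on $r$ and $u$ interacts only mildly. Set $s=(r-R)/R\in[0,1)$ and $t=(u-\sqrt R)/\sqrt R\in[0,1)$. Then every factor becomes a function $F(s,t)$ on $[0,1]^2$ that is real-analytic with derivatives bounded uniformly in $R$: the prefactor and $(u+1)^2/(2N)=\tfrac{(1+t+R^{-1/2})^2}{2(2+2s+(1+t)R^{-1/2}+R^{-1})}$ are smooth in $(s,t)$ with a parameter $R^{-1/2}\in[0,1/4]$. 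A function with uniformly bounded $C^2$ (or analytic) norm on the square has a Fourier expansion $F(s,t)=\sum_{k,l}\hat F(k,l)e^{2\pi i(ks+lt)}$ after smooth extension/periodization, with $\sum|\hat F(k,l)|<\infty$ uniformly. Each term is a rank-one product $e^{2\pi iks}e^{2\pi ilt}$ of Schur norm $1$, so the Schur norm is bounded by $\sum|\hat F|$, which is an absolute constant. Here I will use a fixed cutoff extension to $[-1,2]^2$, so that the Sobolev norm of order greater than $1$ (for example $H^2$) controls $\ell^1$ of the Fourier coefficients.

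The remaining issue is the error term $1+\varepsilon(r,u)$. This is the step I expect to be the main obstacle. A bound $\sup|\varepsilon|\lesssim R^{-1/2}$ is not enough, because the Schur norm of a matrix can exceed its sup norm by up to $\sqrt{\text{size}}$, and here the size is $\sqrt{R}$. In that case the error contribution would be only $O(R^{-1/2}\cdot R^{1/4})$, which does tend to zero but only crudely, so it is safest to avoid relying on it. I would instead use the Hilbert--Schmidt bound $\|M\|_{\mathrm{Schur}}\le\|M\|_{\mathrm{op}}\le\|M\|_{\HS}$ only as a fallback. The cleaner route is to include the full Stirling expansion in the smooth function itself. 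Write $\log\big(2^{-N}\binom Nr\big)$ via $\log\Gamma$, which on the scaled variables is $F_R(s,t)+O(R^{-1})$ with $F_R$ having uniformly bounded derivatives, because the Stirling series for $\log\Gamma(x)$ at $x\asymp R$ has derivatives in $s$ (that is, $R\,\partial_r$) bounded. Then the exact $Rw$ is $\exp$ of a uniformly smooth function of $(s,t)$, and the Fourier argument above applies to it directly with no separate error term. Checking that the scaled derivatives of $\log\Gamma(2r+u+2)-\log\Gamma(r+1)-\log\Gamma(r+u+2)-N\log2$ stay bounded is the main computation, and it reduces to the second-order cancellation that produces the Gaussian exponent.
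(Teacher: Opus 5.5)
Your proof is correct, but it takes a genuinely different route from the paper's.

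\textbf{The paper's argument.} It uses no Stirling asymptotics. It writes $2^{-N}\binom{N}{r}=\frac{1}{2\pi}\int_{-\pi}^{\pi}e^{i(u+1)\theta/2}\cos(\theta/2)^{2r+u+1}\,d\theta$. For each fixed $\theta$, the integrand of $Rw$ is then the rank-one matrix $\bigl[\frac{R}{\sqrt{r+1}}\cos(\theta/2)^{2r}\bigr]\bigl[e^{i(u+1)\theta/2}\cos(\theta/2)^{u+1}\bigr]$. Since $r\geq R$, its Schur norm is at most $\sqrt R\,|\cos(\theta/2)|^{2R}$. Integrating gives $\frac{\sqrt R}{4\pi}\int_{-\pi}^{\pi}|\cos(\theta/2)|^{2R}\,d\theta=O(1)$.

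\textbf{Comparison.} Both arguments write $(Rw)$ as an $L^1$ or $\ell^1$ superposition of rank-one multipliers with bounded factors. The paper gets that superposition exactly from the binomial structure. You build it from uniform smoothness in the parabolic variables $(s,t)$, a fixed cutoff, and the embedding of $H^2(\mathbb T^2)$ into the Wiener algebra. Your route costs more, but it applies to any weight that stays uniformly smooth under the parabolic rescaling, with no need for an exact integral representation.

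\textbf{Your flagged obstacle is not one.} The exact $\log\Gamma$ derivative computation is unnecessary. Schur norms are submultiplicative under entrywise products. There are only $\sqrt R$ columns, so the factorization bound gives
$\|M\|_{\mathrm{Schur}}\leq\max_r\bigl(\sum_{u\in U_R}|M(r,u)|^2\bigr)^{1/2}\leq R^{1/4}\sup|M|$.
Hence the matrix $(1+\varepsilon(r,u))$ has Schur norm $1+O(R^{-1/4})$, even with your bound $|\varepsilon|\lesssim R^{-1/2}$. In fact $\varepsilon=O(R^{-1})$, since $\log\bigl(2^{-N}\binom Nr\bigr)$ is even in $r-N/2$ and the cubic correction vanishes.

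So your first version already closes: a visibly uniformly smooth Gaussian profile times $(1+\varepsilon)$. Your Hilbert--Schmidt fallback, by contrast, would give only $O(R^{1/4})$ with the stated bound on $\varepsilon$.
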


\begin{proof}
With $N=2r+u+1$, the Fourier-coefficient formula for $\cos(t/2)^N$ gives
\[
 Rw(r,u)=\frac{R}{4\pi\sqrt{r+1}}
   \int_{-\pi}^{\pi}
      e^{i(u+1)t/2}\cos(t/2)^{2r+u+1}\,dt.
\]
Factor the integrand as
\[
 \left[\frac{R}{\sqrt{r+1}}\cos(t/2)^{2r}\right]
 \left[e^{i(u+1)t/2}\cos(t/2)^{u+1}\right].
\]
For fixed $t$, this is a rank-one scalar matrix in $(r,u)$; its
Schur-multiplier norm is bounded by the product of the two suprema.  Since
$r\geq R$, one has $R/\sqrt{r+1}\leq\sqrt R$, and the second factor has
modulus at most one.  Integrating the rank-one bounds therefore gives
\[
   \|(Rw(r,u))\|_{\mathrm{Schur}}
      \leq\frac{\sqrt R}{4\pi}
         \int_{-\pi}^{\pi}|\cos(t/2)|^{2R}\,dt,
\]
which is bounded uniformly in $R$ by the standard Gaussian estimate near
$t=0$.
\end{proof}

The oscillation is supplied by a quadratic chirp.

\begin{lemma}\label{lem:chirp}
Define
\[
   H_R(r,u)=\exp\!\left(\frac{2\pi i(r+u)^2}{R}\right),
   \qquad (r,u)\in I_R\times U_R.
\]
Then $H_R^*H_R=R\id$ and $\|H_R\|=\sqrt R$.
\end{lemma}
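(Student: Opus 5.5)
We compute the Gram matrix entrywise. For $u,u'\in U_R$,
\[
   (H_R^*H_R)(u,u')=\sum_{r\in I_R}\overline{H_R(r,u)}H_R(r,u')
   =\sum_{r=R}^{2R-1}\exp\!\left(\frac{2\pi i\big[(r+u')^2-(r+u)^2\big]}{R}\right).
\]
Expanding the squares, the exponent equals
\[
   \frac{2\pi i\big[2r(u'-u)+u'^2-u^2\big]}{R}.
\]
The factor $e^{2\pi i(u'^2-u^2)/R}$ does not depend on $r$, so it comes out of the sum. What remains is the geometric sum $\sum_{r=R}^{2R-1}e^{2\pi i\cdot 2r(u'-u)/R}$.

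This is a sum of the character $r\mapsto e^{2\pi i\,2(u'-u)r/R}$ over a complete residue system modulo $R$, since $I_R$ consists of $R$ consecutive integers. It therefore equals $R$ when $2(u'-u)\equiv0\pmod R$, and $0$ otherwise.

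The only point needing care is to rule out spurious coincidences. For $u\neq u'$ in $U_R$ we have $0<|u-u'|<\sqrt R$. Hence $0<2|u'-u|<2\sqrt R<R$, using $R=16^j\geq16$. So $2(u'-u)$ is never a multiple of $R$. Here $\sqrt R=4^j$ is an integer, so $U_R$ is well defined. For $u=u'$ the sum is trivially $R$. This gives $H_R^*H_R=R\,\id$ on $\ell^2(U_R)$.

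Finally, $\|H_R\|^2=\|H_R^*H_R\|=R$, so $\|H_R\|=\sqrt R$. In other words, the columns of $H_R$ are orthogonal with squared norm $R$, and $H_R/\sqrt R$ is an isometry from $\ell^2(U_R)$ into $\ell^2(I_R)$.
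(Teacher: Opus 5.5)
Your proof is correct and matches the paper's argument: you expand the Gram entries, pull out the $r$-independent phase $e^{2\pi i(u'^2-u^2)/R}$, and show the off-diagonal geometric sum vanishes because $0<2|u-u'|<2\sqrt R<R$. You also spell out two points the paper leaves implicit, namely $\|H_R\|^2=\|H_R^*H_R\|=R$ and the integrality of $\sqrt R=4^j$.
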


\begin{proof}
For $u,v\in U_R$,
\begin{align*}
(H_R^*H_R)(u,v)
 &=\sum_{r=R}^{2R-1}
    \exp\!\left(\frac{2\pi i((r+v)^2-(r+u)^2)}R\right)\\
 &=\exp\!\left(\frac{2\pi i(v^2-u^2)}R\right)
    \sum_{r=R}^{2R-1}
      \exp\!\left(\frac{2\pi i\,2(v-u)r}R\right).
\end{align*}
If $u=v$, the sum is $R$.  If $u\neq v$, then
$0<2|u-v|<2\sqrt R<R$, so the last sum is a complete nontrivial geometric
sum and equals zero.
\end{proof}

For $(r,u)\in I_R\times U_R$, set
\[
   F_R(r,u)=w(r,u)H_R(r,u).
\]

\begin{proposition}\label{prop:one-unit}
There is an absolute $c>0$ such that
\[
   \|F_R\|_1\geq c
\]
for every $R=16^j$.
\end{proposition}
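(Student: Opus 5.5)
Trace duality will do the job: pair $F_R$ against the chirp itself and control the operator norm of the test matrix. Since $\|F_R\|_1\geq |\Tr(F_R G^*)|/\|G\|$ for any $G$ of the same shape, we take $G=H_R$. Then
\[
   \Tr(F_R H_R^*)=\sum_{r\in I_R}\sum_{u\in U_R} w(r,u)H_R(r,u)\overline{H_R(r,u)}
   =\sum_{r\in I_R}\sum_{u\in U_R}w(r,u),
\]
because $|H_R(r,u)|=1$. By Lemma~\ref{lem:flatness}, every entry satisfies $w(r,u)\geq c/R$. The rectangle $I_R\times U_R$ has $R\sqrt R$ entries, so this sum is at least $c\sqrt R$. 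On the other side, Lemma~\ref{lem:chirp} gives $\|H_R\|=\sqrt R$. Dividing yields $\|F_R\|_1\geq c$, with $c$ the absolute constant from Lemma~\ref{lem:flatness}.

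On this route the only inputs are the pointwise lower bound of Lemma~\ref{lem:flatness} and the exact isometry-type identity $H_R^*H_R=R\,\id$. The chirp's role is precisely to make $\|H_R\|$ as small as $\sqrt R$, which is the smallest possible for unimodular entries, so the pairing is efficient. The main thing to check is that the lower bound in Lemma~\ref{lem:flatness} really is uniform on the whole parabolic rectangle. This is where the choice $|u|\asymp\sqrt R$ matters: it is the central-limit window $|r-N/2|\lesssim\sqrt N$, so the binomial mass does not decay exponentially there. That uniformity is granted by the lemma as stated.

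If one prefers an argument that also yields the matching upper bound, a second route runs through Lemma~\ref{lem:schur}. Writing $F_R=R^{-1}(Rw)\circ H_R$, the Schur bound gives $\|F_R\|\lesssim R^{-1}\|H_R\|=R^{-1/2}$. Lemma~\ref{lem:flatness} gives $\|F_R\|_{\HS}^2\asymp R^{-1/2}$. The inequality $\|F\|_{\HS}^2\leq\|F\|\,\|F\|_1$ then gives $\|F_R\|_1\gtrsim R^{-1/2}/R^{-1/2}=1$. Either route suffices. I would present the trace-duality one as the proof and note that the Schur estimate shows the bound is sharp up to constants, which is what the later gluing of disjoint scales $R=16^j$ will use.
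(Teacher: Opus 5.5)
Your trace-duality argument is correct, and it is a genuinely different and shorter route than the paper's. Your ``second route'' is exactly the paper's proof.

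The paper first proves $\|F_R\|\le CR^{-1/2}$. That step needs Lemma~\ref{lem:schur}, to show the non-constant weight $Rw$ does not spoil the orthogonality of the chirp columns. It then applies $\|X\|_1\ge\|X\|_{\HS}^2/\|X\|$, which is trace duality pairing $F_R$ against itself. You instead pair $F_R$ against the unweighted chirp $H_R$, whose norm is known exactly from Lemma~\ref{lem:chirp}. Because $w>0$ and $|H_R(r,u)|=1$, the pairing $\Tr(F_RH_R^*)=\sum w(r,u)$ has no cancellation. So only the lower half of Lemma~\ref{lem:flatness} enters, and in fact only the averaged bound $\sum_{I_R\times U_R}w\gtrsim\sqrt R$. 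Lemma~\ref{lem:schur} is used nowhere else in the paper, so your route makes it, with its Fourier-integral factorization and Gaussian estimate, dispensable.

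What the paper's route buys is the extra structural fact $\|F_R\|\lesssim R^{-1/2}$. This shows the trace norm is spread over $\asymp\sqrt R$ singular values, matching the heuristic stated just after Theorem~\ref{thm:pascal-hankel}, but it is not needed later.

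One correction to your closing remark. The gluing in Proposition~\ref{prop:multiscale} uses only the lower bound $\|F_{R_j}\|_1\ge c$, not sharpness. Also, the matching upper bound $\|F_R\|_1\lesssim1$ does not need Lemma~\ref{lem:schur}: it follows from $\|F_R\|_1\le\sqrt{\operatorname{rank}F_R}\,\|F_R\|_{\HS}\le R^{1/4}\cdot O(R^{-1/4})$ and the upper half of Lemma~\ref{lem:flatness}.
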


\begin{proof} Since
\[
F_R=\frac{1}{R}\bigl((Rw)\circ H_R\bigr),
\]
where \(\circ\) denotes entrywise multiplication,
Lemmas~\ref{lem:schur} and \ref{lem:chirp} give,
\[
   \|F_R\|\leq\frac CR\|H_R\|\leq\frac C{\sqrt R}.
\]
For every finite matrix $X$,
$\|X\|_1\geq\|X\|_{\HS}^2/\|X\|$.  Applying this to $F_R$ gives the result.
\end{proof}

We now accumulate the one-scale units.  Let
\[
 R_j=16^j,
 \qquad
 I^{(J)}=\bigcup_{j=1}^J I_{R_j},
 \qquad
 U^{(J)}=\bigcup_{j=1}^J U_{R_j}.
\]
The sumsets $\Pi_{R_j}$ are pairwise disjoint.  Define
\[
 b_p^{(J)}=
 \begin{cases}
 \exp(2\pi i p^2/R_j),&p\in\Pi_{R_j},\quad1\leq j\leq J,\\
 0,&\text{otherwise},
 \end{cases}
\]
and
\[
   N_J=\sum_{p\geq0}\frac{|b_p^{(J)}|^2}{p+1}\asymp J.
\]
Indeed, each $\Pi_{R_j}$ has cardinality comparable to $R_j$, and
$p+1\asymp R_j$ on that interval.  Set
\[
   a_p^{(J)}=\frac{b_p^{(J)}}{\sqrt{N_J}\sqrt{p+1}}.
\]
Then $\|a^{(J)}\|_2=1$.

For each $J$, define the linear finite-section map
\[
 \mathcal P_J:\ell^2(\Nzero)\longrightarrow\Sone,
 \qquad
 \mathcal P_J(a)=[a_{r+u}W(r,u)]_{r\in I^{(J)},\,u\in U^{(J)}},
\]
again extending the finite matrix by zero.

\begin{proposition}\label{prop:multiscale}
There is an absolute $c>0$ such that
\[
   \|\mathcal P_J(a^{(J)})\|_1\geq c\sqrt J.
\]
Consequently,
$\|\mathcal P_J:\ell^2(\Nzero)\to\Sone\|\geq c\sqrt J$.
\end{proposition}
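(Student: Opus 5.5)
The plan is to show that $\mathcal P_J(a^{(J)})$ contains, as disjoint diagonal blocks, rescaled copies of the one-scale units $F_{R_j}$ from Proposition~\ref{prop:one-unit}. A pinching argument then shows that the trace norms of these blocks simply add.

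\emph{Step 1: identify the diagonal blocks.} For $r\in I_{R_j}$ and $u\in U_{R_j}$ we have $p=r+u\in\Pi_{R_j}$. Hence $b^{(J)}_{p}=\exp(2\pi i(r+u)^2/R_j)=H_{R_j}(r,u)$. Using $W(r,u)=\sqrt{r+u+1}\,w(r,u)$, we get
\[
a^{(J)}_{r+u}W(r,u)=\frac{H_{R_j}(r,u)\,w(r,u)}{\sqrt{N_J}}=\frac{F_{R_j}(r,u)}{\sqrt{N_J}} .
\]
So the compression $P_{I_{R_j}}\mathcal P_J(a^{(J)})P_{U_{R_j}}$ equals $N_J^{-1/2}F_{R_j}$. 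This uses the pairwise disjointness of the sumsets $\Pi_{R_j}$, so that no other scale contributes to $b^{(J)}_p$ at these $p$.

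\emph{Step 2: disjointness of the index sets.} We have $I_{R_j}=[16^j,2\cdot16^j)$ and $U_{R_j}=[4^j,2\cdot 4^j)$. Both families are pairwise disjoint in $j$. The off-diagonal blocks $I_{R_j}\times U_{R_i}$ with $i\neq j$ may carry nonzero entries, and we will not control them; instead we discard them by pinching.

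\emph{Step 3: pinching.} For any finite matrix $X$ and pairwise disjoint families $(I_j)$ and $(U_j)$,
\[
\|X\|_1\geq\sum_j\|P_{I_j}XP_{U_j}\|_1 .
\]
To prove this, use the duality $\|Y\|_1=\sup_{\|V\|\le1}|\Tr(VY)|$. Choose contractions $V_j$ with $\Tr(V_jP_{I_j}XP_{U_j})=\|P_{I_j}XP_{U_j}\|_1$, and set $V=\sum_j P_{U_j}V_jP_{I_j}$. Then $V$ is a contraction, because its summands have mutually orthogonal initial and final spaces. Moreover
\[
\Tr(VX)=\sum_j\Tr\bigl(V_jP_{I_j}XP_{U_j}\bigr)=\sum_j\|P_{I_j}XP_{U_j}\|_1 .
\]

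\emph{Step 4: conclude.} Applying Step~3 and Proposition~\ref{prop:one-unit} gives
\[
\|\mathcal P_J(a^{(J)})\|_1\ \geq\ \sum_{j=1}^J\frac{\|F_{R_j}\|_1}{\sqrt{N_J}}\ \geq\ \frac{cJ}{\sqrt{N_J}} .
\]
Since $N_J\asymp J$, this is at least $c'\sqrt J$. Because $\|a^{(J)}\|_2=1$, the operator-norm bound for $\mathcal P_J$ follows immediately.

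The only point requiring care is the justification of $N_J\asymp J$, namely that $|\Pi_{R_j}|\asymp R_j$ and $p+1\asymp R_j$ on $\Pi_{R_j}$, which is elementary. Everything else is the pinching inequality, which I expect to be the main, though routine, obstacle.
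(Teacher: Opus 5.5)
Your proposal is correct and follows essentially the same route as the paper. You identify the matched blocks $I_{R_j}\times U_{R_j}$ as $N_J^{-1/2}F_{R_j}$, pinch onto these disjoint blocks, and sum the one-scale bounds using $N_J\asymp J$. The only difference is that you prove the pinching inequality by trace duality with a block-diagonal contraction, whereas the paper writes the pinching as a phase average of products with contractions; the two arguments are equivalent.
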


\begin{proof}
On the matched block $I_{R_j}\times U_{R_j}$,
\[
   \mathcal P_J(a^{(J)})(r,u)
      =\frac1{\sqrt{N_J}}F_{R_j}(r,u).
\]
Let
\[
\mathcal E_J(X)=\sum_{j=1}^J P_jXQ_j,
\]
where \(P_j\) and \(Q_j\) are the coordinate projections onto
\(I_{R_j}\) and \(U_{R_j}\), respectively. This map is trace-norm
contractive. Indeed,
\[
\mathcal E_J(X)
=
\frac{1}{2\pi}\int_0^{2\pi}
\left(\sum_{j=1}^J e^{ijt}P_j\right)
X
\left(\sum_{j=1}^J e^{ijt}Q_j\right)^*
\,dt,
\]
and both sums of phased orthogonal projections have norm one.

Moreover, the sets \(I_{R_j}\) are pairwise disjoint, as are the sets
\(U_{R_j}\). Hence \(\mathcal E_J(X)\) is a direct sum of its matched
blocks and
\[
\|\mathcal E_J(X)\|_1
=
\sum_{j=1}^J \|P_jXQ_j\|_1.
\]  

Therefore,
using Proposition~\ref{prop:one-unit},
\[
\begin{aligned}
\|P_J(a^{(J)})\|_1
&\geq \|\mathcal E_J(P_J(a^{(J)}))\|_1 \\
&= \frac{1}{\sqrt{N_J}}
   \sum_{j=1}^J \|F_{R_j}\|_1 \\
&\gtrsim \frac{J}{\sqrt{N_J}}
 \asymp \sqrt{J}.
\end{aligned}
\]
Since $\|a^{(J)}\|_2=1$, the operator-norm conclusion follows.
\end{proof}

\begin{proof}[Proof of Theorem~\ref{thm:pascal-hankel}]
Put
\[
F_J=I^{(J)}+U^{(J)}.
\]
Since \(F_J\) is finite, \(P_J\) depends only on the coordinates
\((a_p)_{p\in F_J}\). Thus \(P_J\) factors through the coordinate
restriction
\[
\ell^2(\mathbb N_0)\longrightarrow \ell^2(F_J),
\]
and is therefore a bounded linear map
\[
P_J:\ell^2(\mathbb N_0)\longrightarrow\mathcal S_1.
\]  Proposition~\ref{prop:multiscale} gives
$\sup_J\|\mathcal P_J\|=\infty$.  By the Uniform Boundedness Principle, there
exists a single $a\in\ell^2$ such that
\[
   \sup_J\|\mathcal P_J(a)\|_1=\infty.
\]
After normalization, we may assume $\|a\|_2=1$.
\end{proof}

\begin{remark}
If the multiplier $W$ were absent, the same chirp idea would be much stronger.
On an $R\times cR$ rectangle, the normalized sequence
$a_p\asymp R^{-1/2}e^{2\pi ip^2/R}$ produces a partial Fourier matrix with
$\asymp R$ orthogonal columns of norm $\asymp1$, hence trace norm
$\asymp R$.  The Pascal multiplier forces the useful transverse width down to
$\sqrt R$ and contributes an additional factor $R^{-1/2}$, leaving only
order-one trace norm per scale.  The multiscale argument recovers divergence
from this critical loss.
\end{remark}

\section{Transfer to the exact Bergman maps}
\label{sec:transfer}

Fix the sequence $a=(a_p)\in\ell^2$ given by
Theorem~\ref{thm:pascal-hankel}, normalized by $\|a\|_2=1$, and put
\[
   x=\sum_{p\geq0}a_pe_p\in\A.
\]

The purpose of this section is to transfer the trace-norm growth of finite
compressions of \(Q_m(A_m)\) into operator-norm growth of the maps \(Q_m\).

The rank-one operators used below are moving test operators. The single
compact counterexample is obtained only at the end, by the
Uniform Boundedness Principle. For every $m$, define the rank-one operator
\[
   A_m=x\otimes e_m.
\]
Then $\|A_m\|_1=1$.

For fixed $J$ and $m>\max U^{(J)}$, define
\[
 C_{J,m}=\{m-u:u\in U^{(J)}\},
 \qquad
 Z_{J,m}=P_{I^{(J)}}Q_m(A_m)P_{C_{J,m}}.
\]
Identify $\ell^2(U^{(J)})$ with $\ell^2(C_{J,m})$ by
$e_u\mapsto e_{m-u}$.

\begin{lemma}\label{lem:exact-convergence}
For every fixed $J$,
\[
   \lim_{m\to\infty}\|Z_{J,m}-\mathcal P_J(a)\|_1=0.
\]
\end{lemma}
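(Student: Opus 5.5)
We compute the entries of $Z_{J,m}$ exactly and compare them entrywise with those of $\mathcal P_J(a)$. Since both matrices have the fixed finite size $|I^{(J)}|\times|U^{(J)}|$, entrywise convergence gives trace-norm convergence, because all norms on a fixed finite-dimensional matrix space are equivalent.

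First we write $A_m=x\otimes e_m=\sum_{p\ge0}a_pE_{pm}$, with the series converging in trace norm, since $\|\sum_{p>P}a_pE_{pm}\|_1=(\sum_{p>P}|a_p|^2)^{1/2}$. We need $Q_m$ to pass through this sum. For fixed $m$, $Q_m$ is bounded on trace class into $\Bop(\A)$, because $\|B_m(S)\|_\infty\le\|S\|\,\|\Phi_m\|_1$ and $\|T_h\|\le\|h\|_\infty$. Hence
\[
Q_m(A_m)e_k=\sum_{p}a_pQ_m(E_{pm})e_k .
\]
Take $k=m-u\in C_{J,m}$ with $u\in U^{(J)}$. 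By Lemma~\ref{lem:exact-coeff}, $Q_m(E_{pm})e_{m-u}=c_m(p,m;m-u)e_{p-u}$ when $p\ge u$, and it is zero otherwise. Therefore the $(r,m-u)$ entry of $Q_m(A_m)$ receives a contribution only from $p=r+u$, and
\[
\langle Q_m(A_m)e_{m-u},e_r\rangle=a_{r+u}\,c_m(r+u,m;m-u).
\]
This is exactly the diagonal-conservation law. It shows that, after the identification $e_u\leftrightarrow e_{m-u}$, the compression $Z_{J,m}$ is the matrix $[a_{r+u}c_m(r+u,m;m-u)]_{r\in I^{(J)},u\in U^{(J)}}$.

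Next, setting $p=r+u$, the index $m-u$ equals $m+r-p$. Lemma~\ref{lem:pascal-kernel} therefore gives
\[
c_m(r+u,m;m-u)\to K(r+u,r)=W(r,u)
\]
for each of the finitely many pairs $(r,u)\in I^{(J)}\times U^{(J)}$. Remark~\ref{rem:order-limits} applies: $J$ is fixed first, so no uniformity in $r$ or $u$ is needed. We then obtain the bound
\[
\|Z_{J,m}-\mathcal P_J(a)\|_1\le\sum_{r,u}|a_{r+u}|\,|c_m(r+u,m;m-u)-W(r,u)|\longrightarrow0,
\]
which uses the elementary inequality $\|X\|_1\le\sum|X_{ij}|$ for matrix units of trace norm one.

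The only delicate point is the interchange of $Q_m$ with the infinite sum defining $A_m$. The bound $\|Q_m\|_{\Sone\to\Bop}\le(m+1)\sum_j\binom mj/(j+1)<\infty$ for each fixed $m$ handles it. Alternatively, we can avoid the interchange altogether: compute the entry as $\Tr(Q_m(A_m)E_{m-u,r})$ and use Lemma~\ref{lem:trace-transpose} to move $Q_m$ onto the finite-rank operator $E_{m-u,r}$. The remaining trace is then a single pairing with $x$, which is absolutely convergent. That route yields the same entry formula after applying the coefficient symmetry of Lemma~\ref{lem:trace-transpose}.
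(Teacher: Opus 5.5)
Your proposal is correct and follows the paper's proof. The diagonal-conservation law of Lemma~\ref{lem:exact-coeff} identifies the entries of $Z_{J,m}$ as $a_{r+u}\,c_m(r+u,m;m-u)$, Lemma~\ref{lem:pascal-kernel} gives entrywise convergence to $W(r,u)$ on the fixed finite index set, and the fixed dimension turns this into trace-norm convergence; in addition, you justify passing $Q_m$ through the trace-norm convergent sum $A_m=\sum_p a_pE_{pm}$ via $\|Q_m\|_{\Sone\to\Bop}\le\|\Phi_m\|_1$, a step the paper leaves implicit.
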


\begin{proof}
By the diagonal conservation law in Lemma~\ref{lem:exact-coeff}, the only
coefficient of the one-column operator
$A_m=\sum_{p\geq0}a_pE_{p,m}$ that can contribute to row $r$ and column
$m-u$ is $p=r+u$.  Hence
\[
   \langle Q_m(A_m)e_{m-u},e_r\rangle
       =a_{r+u}c_m(r+u,m;m-u).
\]
For fixed $J$, the pairs $(r,u)$ range over finite sets.
Lemma~\ref{lem:pascal-kernel}, applied with $p=r+u$, gives
\[
   c_m(r+u,m;m-u)\longrightarrow K(r+u,r)=W(r,u).
\]
Thus the finite matrices converge entrywise.  Their dimensions are fixed, so
they converge in every matrix norm, in particular in trace norm.
\end{proof}

Since $\sup_J\|\mathcal P_J(a)\|_1=\infty$, the lemma implies that, for every
$M>0$, there are $J$ and then $m$ sufficiently large such that
$\|Z_{J,m}\|_1>M$.

\begin{proposition}
\label{prop:unbounded-restrictions}
\[
   \sup_{m\geq0}
   \|Q_m:\Kop(\A)\longrightarrow\Bop(\A)\|=\infty.
\]
\end{proposition}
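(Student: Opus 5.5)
We argue by trace duality: a trace-norm lower bound for a compression of $Q_m(A_m)$ will be converted into an operator-norm lower bound for $Q_m$ applied to a compact (indeed finite-rank) input. Fix $M>0$. By Lemma~\ref{lem:exact-convergence} and Theorem~\ref{thm:pascal-hankel}, we first choose $J$ with $\|\mathcal P_J(a)\|_1>2M$. We then choose $m>\max U^{(J)}$ so large that $\|Z_{J,m}\|_1>M$.

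The matrix $Z_{J,m}=P_{I^{(J)}}Q_m(A_m)P_{C_{J,m}}$ is finite. By trace duality there is a finite-rank contraction $B$, supported so that $B=P_{C_{J,m}}BP_{I^{(J)}}$ with $\|B\|\le1$, such that
\[
   \Tr(Z_{J,m}B)=\|Z_{J,m}\|_1 .
\]
Concretely, $B$ is the partial isometry from the polar decomposition of $Z_{J,m}$, read as an operator on $\A$. Since $B$ absorbs the projections, we have $\Tr(Z_{J,m}B)=\Tr(Q_m(A_m)B)$.

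Next we apply Lemma~\ref{lem:trace-transpose} with $S=A_m$ and $A=B$. Both operators are finite rank only after truncating $x$, so we first replace $A_m$ by $A_m^{(N)}=x_N\otimes e_m$, where $x_N=\sum_{p\le N}a_pe_p$. By the diagonal conservation law in Lemma~\ref{lem:exact-coeff}, for $N\ge\max I^{(J)}+\max U^{(J)}$ the compression $Z_{J,m}$ involves only the coefficients $a_p$ with $p\le N$. Hence $Z_{J,m}$ is unchanged by this truncation. The lemma then gives
\[
   \|Z_{J,m}\|_1=\Tr\big(Q_m(A_m^{(N)})B\big)=\Tr\big(A_m^{(N)}Q_m(B)\big).
\]
Since $A_m^{(N)}$ is rank one with $\|A_m^{(N)}\|_1\le\|x\|=1$, we have
\[
   \big|\Tr(A_m^{(N)}Q_m(B))\big|\le\|Q_m(B)\|\,\|A_m^{(N)}\|_1\le\|Q_m(B)\| .
\]

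Combining these, $\|Q_m(B)\|>M$ while $\|B\|\le1$ and $B$ is finite rank, hence compact. Therefore $\|Q_m:\Kop(\A)\to\Bop(\A)\|>M$, and since $M$ was arbitrary the supremum is infinite. The main care point is the legitimacy of the transpose identity, which is stated for finite-rank operators, together with the truncation of $x$. The diagonal conservation law settles this, because only finitely many coefficients of $x$ enter the fixed finite compression. Alternatively one could extend Lemma~\ref{lem:trace-transpose} to trace-class $S$ by continuity, using the absolute-convergence estimates in its proof.
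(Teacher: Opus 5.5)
Your argument is correct and is essentially the paper's proof: the partial isometry from the polar decomposition, supported on $C_{J,m}\times I^{(J)}$, the transpose identity of Lemma~\ref{lem:trace-transpose}, and the bound $|\Tr(A_m Q_m(B))|\le\|Q_m(B)\|\,\|A_m\|_1$. The truncation of $x$ is harmless but unnecessary, because $A_m=x\otimes e_m$ is already rank one, hence finite rank, so Lemma~\ref{lem:trace-transpose} applies to it directly.
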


\begin{proof}
Choose $J,m$ with $\|Z_{J,m}\|_1>M$, and write the polar decomposition
$Z_{J,m}=U|Z_{J,m}|$.  Extend $V=U^*$ by zero outside the corresponding row
and column spaces.  Then $V$ is finite rank, $\|V\|=1$, and
\[
   V=P_{C_{J,m}}VP_{I^{(J)}}.
\]
Consequently, cyclicity of the trace and the support identities give
\begin{align*}
\Tr(VQ_m(A_m))
 &=\Tr\!\left(VP_{I^{(J)}}Q_m(A_m)P_{C_{J,m}}\right)\\
 &=\Tr(U^*Z_{J,m})=\|Z_{J,m}\|_1.
\end{align*}
By Lemma~\ref{lem:trace-transpose},
\[
   \Tr(Q_m(V)A_m)=\|Z_{J,m}\|_1.
\]
Since $\|A_m\|_1=1$,
\[
   \|Q_m(V)\|
      \geq|\Tr(Q_m(V)A_m)|
      =\|Z_{J,m}\|_1>M.
\]
As $M$ is arbitrary, the restrictions are not uniformly bounded.

For completeness, each fixed $Q_m$ is bounded on $\Bop(\A)$.  Indeed, the
defining formula gives
$\|B_m(S)\|_\infty\leq C_m\|S\|$, while
$\|T_f\|\leq\|f\|_\infty$.  Thus
$Q_m|_{\Kop(\A)}:\Kop(\A)\to\Bop(\A)$ is a bounded linear map between Banach
spaces.
\end{proof}

\begin{proof}[Proof of Theorem~\ref{thm:main}]
Proposition~\ref{prop:unbounded-restrictions} and the Uniform Boundedness
Principle give an $S\in\Kop(\A)$ such that
\[
   \sup_{m\geq0}\|Q_m(S)\|=\infty.
\]
Every finite initial set of indices contributes only finitely many finite
norms, so the same supremum is infinite on every tail.  We may therefore choose
strictly increasing $m_n$ such that
\[
   \|Q_{m_n}(S)\|\geq2^n.
\]
It follows that
\[
   \|Q_{m_n}(S)-S\|\geq2^n-\|S\|\longrightarrow\infty.
\]

It remains to note that every compact operator belongs to the full Toeplitz
algebra.  Let $W=T_z$.  In the basis $(e_n)$,
\[
   We_n=\sqrt{\frac{n+1}{n+2}}\,e_{n+1}.
\]
The commutator $[W^*,W]$ is the positive diagonal compact operator with
eigenvalues
\[
   \frac12,\ \frac1{2\cdot3},\ \frac1{3\cdot4},\ldots.
\]
The eigenvalue $1/2$ is isolated, so its spectral projection $E_{00}$ belongs
to $C^*(W)$.  Multiplying by powers of $W$ and $W^*$, and rescaling by the
nonzero weights, shows that every matrix unit $E_{pq}$ belongs to $C^*(W)$.
Therefore
\[
   \Kop(\A)\subset C^*(T_z)\subset\Top(L^\infty),
\]
and the compact operator $S$ above lies in the full Toeplitz algebra.
\end{proof}

\section{Why the radial theory does not see the obstruction}
\label{sec:radial}

The matrix calculation makes the contrast with the radial case transparent.
A radial operator is diagonal in the monomial basis and hence lives on the
main matrix diagonal.  The obstruction above is a moving-edge phenomenon: the
input column is $q=m$, while the relevant input row remains fixed on each
finite block, so the angular frequency $p-q$ tends to $-\infty$.

There is also a convolution explanation.  For arbitrary $S$,
\[
   Q_m(S)=(S\ast_\pi\Phi_m)\ast_\pi\Phi.
\]
If $S$ is radial, the radial interchange identity allows the factors to be
commuted and gives
\[
   Q_m(S)=\varphi_m\ast_\pi S,
   \qquad
   \varphi_m:=\Phi_m\ast_\pi\Phi.
\]
Here
\[
\varphi_m(z)=(m+1)(1-|z|^2)^{m+2}.
\]
Thus \(\varphi_m\geq 0\),
\[
\int_{\mathbb D}\varphi_m(z)\,d\lambda(z)=1,
\]
and \((\varphi_m)\) is a positive approximate identity.  Radial norm
convergence is therefore an ordinary approximate-identity phenomenon on the
norm-continuous radial orbit; see
\cite{BauerHerreraVasilevski,DawsonDewageMitkovskiOlafsson}.

The counterexample exploits precisely what disappears under radialization.
The maps $Q_m$ preserve every matrix diagonal, but there is no uniform control
over diagonals whose angular frequency escapes with $m$.  On the moving edge,
the exact coefficient tensor converges to the Pascal kernel and turns one
rank-one input column into a weighted Hankel matrix.  The Pascal weight has the
critical parabolic size
\[
   W(r,u)\asymp r^{-1/2}
   \qquad\text{for }u\asymp\sqrt r,
\]
and decays rapidly outside that window.  Thus each large scale retains an
order-one amount of trace norm, and separated scales accumulate through an
$\ell^1/\ell^2$ mismatch.

\paragraph{\bf AI disclosure statement} The GPT-5.6 Sol model with max effort, accessed through OpenAI’s Codex CLI, was used to construct the initial version of this counterexample.
More precisely, the model was used intermittently over an elapsed period of approximately 19.5 hours to analyze Suárez’s
higher-order Berezin approximation conjecture and search for a counterexample in full generality. The authors subsequently
independently checked and validated the construction, simplified several parts of the analysis, and rewrote the paper in
its entirety. They take full responsibility for all mathematical content of the paper, including every claim and argument
concerning the counterexample construction.

This work was supported in part by OpenAI API credits provided by Clemson University and administered by Clemson University Research Computing and Data (RCD).

\enlargethispage{2\baselineskip}

\end{document}